\let\pa\partial  
\let\na\nabla  
\let\eps\varepsilon  
\newcommand{\N}{{\mathbb N}}  
\newcommand{\R}{{\mathbb R}} 
\newcommand{\diver}{\operatorname{div}}
\newtheorem{theorem}{Theorem}   
\newtheorem{lemma}[theorem]{Lemma}   
\newtheorem{remark}[theorem]{Remark}   
\newtheorem{corollary}[theorem]{Corollary}  
\newtheorem{definition}{Definition}
\begin{document}  

\title[Reaction-cross-diffusion systems]{Global renormalized
solutions to reaction-cross-diffusion systems}
 
\author[X. Chen]{Xiuqing Chen}
\address{School of Sciences, Beijing University of Posts and Telecommunications,
Beijing 100876, China}
\email{buptxchen@yahoo.com}

\author{Ansgar J\"ungel}
\address{Institute for Analysis and Scientific Computing, Vienna University of  
Technology, Wiedner Hauptstra\ss e 8--10, 1040 Wien, Austria}
\email{juengel@tuwien.ac.at} 

\date{\today}

\thanks{The first author acknowledges support from the National Natural Science
Foundation of China (NSFC), grant 11471050, and from the China Scholarship Council
(CSC), file no.\ 201706475001, who financed his stay in Vienna.
The second author acknowledges partial support from
the Austrian Science Fund (FWF), grants P27352, P30000, F65, and W1245} 

\begin{abstract}
The global-in-time existence of renormalized solutions to 
reaction-cross-diffu\-sion systems for an arbitrary number of variables
in bounded domains with no-flux boundary conditions is proved. 
The cross-diffusion part describes the segregation of population species
and is a generalization of the Shigesada-Kawasaki-Teramoto model. 
The diffusion matrix is not diagonal and generally neither symmetric nor
positive semi-definite, but the system possesses a formal gradient-flow or 
entropy structure. The reaction part includes reversible reactions of 
mass-action kinetics and does not obey any growth condition. The existence result
generalizes both the condition on the reaction part required in the boundedness-by-entropy
method and the proof of J.~Fischer for reaction-diffusion
systems with diagonal diffusion matrices.
\end{abstract}

\keywords{Reaction-cross-diffusion systems, renormalized solutions,
gradient flow, entropy method, population model, defect measure.}  
 
\subjclass[2000]{35K51, 35K57, 35Q92, 92D25.}  

\maketitle


\section{Introduction}

Multi-species systems from thermodynamics, population dynamics, and cell biology,
for instance, are often modeled by reaction-cross-diffusion equations.
Cross diffusion occurs when the gradient of the density of one species induces a
flux of another species. Therefore, cross-diffusion systems are strongly coupled, and
only weak solutions can be expected. When the reaction terms grow too fast
with the densities, there is no control of these terms and the definition
of a weak or distributional solution is generally impossible. For this reason,
growth restrictions have been imposed
on the reactions in the literature \cite{Jue15,LeMo17}. Roughly speaking,
the reaction terms cannot grow faster than linear. The
cross-diffusion systems from physics, biology, and chemistry often allow for entropy
(or free energy) estimates that prevent the global blowup of solutions,
but the bounds are not sufficient to define weak solutions. This suggests the
concept of {\em renormalized} solutions. This approach was successfully realized
by J.~Fischer \cite{Fis15} for reaction-diffusion systems, i.e.\ weakly coupled
equations. In this paper, we extend his approach to strongly coupled systems.

More specifically, we investigate cross-diffusion systems from population dynamics,
which extend the well-known model of Shigesada, Kawasaki, and Teramoto \cite{SKT79}.
The evolution of the density $u_i=u_i(x,t)$ of the $i$th population species
is governed by the equation
\begin{equation}\label{1.eq}
  \pa_t u_i - \diver\bigg(\sum_{j=1}^n A_{ij}(u)\na u_j - u_ib_i\bigg) = f_i(u)
	\quad\mbox{in }\Omega,\ i=1,\ldots,n,
\end{equation}
where $A_{ij}(u)$ are the density-dependent diffusion coefficients, 
$u=(u_1,\ldots,u_n)$ is the density vector, $b_i=(b_{i1},\ldots,b_{in})$ is
a given vector which describes the environmental potential acting on the $i$th species,
$f_i(u)$ is a reaction term describing the population growth dynamics,
$\Omega\subset\R^n$ ($d\ge 1$) is a bounded domain, and $n\in\N$ is the number of 
species. We impose no-flux boundary and initial conditions,
\begin{equation}\label{1.bic}
  \bigg(\sum_{j=1}^n A_{ij}(u)\na u_j - u_ib_i\bigg)\cdot\nu = 0\quad\mbox{on }\pa\Omega,
	\quad u_i(\cdot,0)=u_i^0\quad\mbox{in }\Omega,\ i=1,\ldots,n,
\end{equation}
where $\nu$ is the exterior unit normal vector on $\pa\Omega$.
The diffusion coefficients are given by
\begin{equation}\label{1.A}
  A_{ij}(u) = \delta_{ij}\bigg(a_{i0} + \sum_{k=1}^n a_{ik}u_k\bigg) + a_{ij}u_i,
	\quad i,j=1,\ldots,n,
\end{equation}
where $a_{i0}\ge 0$, $a_{ij}\ge 0$ for $i,j=1,\ldots,n$, and $\delta_{ij}$ is the
Kronecker delta. The reaction terms are often given by Lotka-Volterra-type
expressions, but we allow for fast growing populations, and no growth condition
on $f_i$ will be imposed.
Observe that \eqref{1.eq} can be written in a more compact form as
\begin{equation}\label{1.comp}
  \pa_t u - \diver(A(u)\na u-ub) = f(u),
\end{equation}
where the matrix $ub$ is defined as $(ub)_{ij}:=u_ib_{ij}$.

For $n=2$, we recover the population model of Shigesada, Kawasaki, and Teramoto
\cite{SKT79}, which descibes the segregation of two population species. 
Equations \eqref{1.eq}, \eqref{1.A} for an arbitrary number of species, $n\ge 2$, 
have been formally
derived in \cite{ZaJu17} from a random-walk on a lattice in the diffusion limit.

For the analysis, we impose two key assumptions. First, we assume that the
reaction terms $f_i$ are continuous on $[0,\infty)^n$ and that there are numbers
$\pi_1,\ldots,\pi_n>0$ and $\lambda_1,\ldots,\lambda_n\in\R$ 
such that for all $u\in(0,\infty)^n$,
\begin{equation}\label{1.f}
  \sum_{i=1}^n\pi_i f_i(u)(\log u_i+\lambda_i)\le 0.
\end{equation}
These conditions imply the quasi-positivity property
$f_i(u)\ge 0$ for all $u\in[0,\infty)^n$ with $u_i=0$, which is a necessary condition 
for having nonnegative solutions to \eqref{1.eq}.
Moreover, it ensures that the so-called entropy density
\begin{equation}\label{1.h}
  h(u) = \sum_{i=1}^n\pi_i h_i(u_i), 
	\quad h_i(s)=s(\log s-1)+\lambda_i)+e^{-\lambda_i},
\end{equation}
is a Lyapunov functional for the reaction system $\pa_t u_i = f_i(u)$ if $\pi_i=1$
for all $i$. Condition \eqref{1.f} (with $\pi_i=1$) was also used in \cite{Fis15}. 
Compared to \cite{Fis15}, we do not assume
local Lipschitz continuity of $f_i$ but only continuity.

Second, to ensure that the entropy \eqref{1.h} yields a 
Lyapunov functional also for the full system
\eqref{1.eq}, we need to impose some conditions on the coefficients $a_{ij}$.
It was shown in \cite{CDJ17} that a sufficient requirement is either
the weak cross-diffusion assumption
\begin{equation}\label{1.wcd}
  \alpha := \min_{i=1,\ldots,n}\bigg(a_{ii} - \frac14\sum_{j=1}^n
	\big(\sqrt{a_{ij}}-\sqrt{a_{ji}}\big)^2\bigg) > 0, 
\end{equation}
or the detailed-balance condition
\begin{equation}\label{1.dbc}
  \pi_i a_{ij}=\pi_j a_{ji}>0 \quad\mbox{for all }i,j=1,\ldots,n,\ i\neq j.
\end{equation}
In the former case, we may choose $\pi_i=1$ in \eqref{1.f} and \eqref{1.h}.
Condition \eqref{1.wcd} requires implicitly that $a_{ii}>0$, while \eqref{1.dbc}
requires that $a_{ij}>0$ for $i\neq j$. A formal computation shows that 
$$
  \frac{d}{dt}\int_\Omega h(u)dx 
	+ \int_\Omega\na u:\big(h''(u)A(u)\na u - h''(u)ub\big) dx
	\le 0,
$$
where "`:"' is the Frobenuis matrix product and $h''(u)$ is the Hessian of the 
entropy density $h(u)$. The drift term $\na u:h''(u)ub=\na u:b$ can be estimated 
by using the Cauchy-Schwarz inequality, and the matrix $h''(u)A(u)$ is 
positive definite for $u_i>0$. More precisely, assuming \eqref{1.f} and \eqref{1.wcd},
it follows that
\begin{equation}\label{1.epi1}
  \frac{d}{dt}\int_\Omega h(u)dx + 4\int_\Omega\sum_{i=1}^n a_{i0}|\na\sqrt{u_i}|^2dx
	+ \alpha\int_\Omega\sum_{i=1}^n|\na u_i|^2 dx \le 0,
\end{equation}
while under the conditions \eqref{1.f} and \eqref{1.dbc}, we have
\begin{equation}\label{1.epi2}
  \frac{d}{dt}\int_\Omega h(u)dx + 2\int_\Omega\sum_{i=1}^n\pi_i\bigg(
	2a_{i0}|\na \sqrt{u_i}|^2 + a_{ii}|\na u_i|^2
	+ \sum_{j\neq i}a_{ij}|\na\sqrt{u_iu_j}|^2\bigg)dx \le 0,
\end{equation}
thus obtaining gradient estimates for $\sqrt{u_i}$ if $a_{i0}>0$ and for
$u_i$ if $\alpha>0$ or $a_{ii}>0$. 

Let us briefly comment on conditions \eqref{1.wcd} and \eqref{1.dbc}; for details, we
refer to \cite{CDJ17}. 
If $(a_{ij})$ is symmetric and $a_{ii}>0$, then \eqref{1.wcd} is fulfilled. 
Otherwise, the condition requires that 
the coefficient $a_{ii}$ is larger than the
``defect of symmetry'' of the matrix $(a_{ij})$
or that the cross-diffusion coefficients
$a_{ij}$ are small compared to the self-diffusion coefficients $a_{ii}$. 
Assumption \eqref{1.dbc} is the detailed-balance
condition for the Markov chain associated to $(a_{ij})$, and $(\pi_1,\ldots,\pi_n)$
is the reversible measure of the Markov chain. It turns out that
this condition is equivalent to the symmetry of $h''(u)A(u)$ or, equivalently, 
of the so-called Onsager matrix $B=A(u)h''(u)^{-1}$. 
This indicates a close relationship between symmetry and
reversibility, which is well known in nonequilibrim thermodynamics
(also see \cite[Section 4.3]{Jue16}).

Before stating our main result, we review the state of the art.
The existence of global weak solutions to \eqref{1.eq} was proved
in \cite{ChJu04,ChJu06} for two species and in \cite{CDJ17} for an arbitrary
number of species. The global existence was also proved for diffusion matrices
with nonlinear coefficients $A_{ij}(u)$. 
The case of sublinearly growing coefficients was
treated in \cite{DLM14}, whereas superlinear growth was analyzed in 
\cite{DLMT15,Jue15} (for two species). The results were generalized to
$n$ species in \cite{CDJ17,LeMo17}. In \cite{Jue15,LeMo17}, the condition on
the reaction terms is as follows (formulated here for the linear case): 
There exists $C>0$ such that for all $u\in(0,\infty)^n$,
\begin{equation}\label{1.f2}
  \sum_{i=1}^n f_i(u)\log u_i\le C(1+h(u)).
\end{equation}
This inequality is satisfied for functions $f_i$ which grow at most linearly.
Comparing this condition with \eqref{1.f}, written as
$\sum_{i=1}^n f_i(u)\log u_i\le -\sum_{i=1}^n\lambda_i f_i(u)$, we see that
for $\lambda_i<0$, this inequality is usually weaker than \eqref{1.f2}.

When the diffusion matrix $A(u)$ is diagonal and constant, global existence
results for \eqref{1.eq} were shown in \cite{DFPV07} and later extended to
$L^1$ data in \cite{PiRo16}.
A more general result, assuming space-time dependent $A_{ij}$ and mass
action kinetics, was shown in \cite{Kra11}. 
In \cite{DeFe15}, strongly degenerate diffusion systems, still with diagonal
diffusion matrices, were analyzed.  
When the reactions have quadratic growth and are 
dissipative in the sense $\sum_{i=1}^n f_i(u)\le 0$,
classical solutions can be obtained \cite{PSY16}. 
If the diffusion coefficients are close to each other, even superquadratic
growth in the reaction terms is possible \cite{FLS16}. On the other hand,
it was shown in \cite{PiSc97} that the $L^\infty$ norm of the solutions to
\eqref{1.eq} with density-dependent diffusion coefficients may blow up
in finite time, even if the total mass is controlled. As mentioned above,
existence of renormalized solutions for general reaction terms,
involving a single reversible reaction with mass-action kinetics, was proved
in \cite{Fis15}. Furthermore, it was shown that the renormalized
solutions satisfy a weak entropy-production inequality \cite{Fis17}
and that they converge exponentially fast in the $L^1$ norm to the equilibrium
\cite{FeTa17}.

In this paper, we combine the entropy method used in \cite{CDJ17,DLMT15,LeMo17}
and the concept of renormalized solutions of \cite{Fis15}.
Our hypotheses are as follows. 
\renewcommand{\labelenumi}{(H\theenumi)}
\begin{enumerate}[leftmargin=10mm,itemsep=5pt]
\item Drift term: $b=(b_1,\ldots,b_n)$, $b_i\in L^\infty(0,T;L^\infty(\Omega;\R^n))$,
$i=1,\ldots,n$.
\item Reaction terms: $f=(f_1,\ldots,f_n)\in C^0([0,\infty)^n;\R^n)$.
\item Initial data: $u^0=(u_1^0,\ldots,u_n^0)$ is measurable, $u_i^0\ge 0$ in
$\Omega$, $i=1,\ldots,n$, and $\int_\Omega h(u^0)dx<\infty$, where $h$ is
defined in \eqref{1.h}.
\end{enumerate}

\begin{description}[leftmargin=10mm]
\item[\rm (H4)] There exist numbers $\pi_i>0$ and $\lambda_i\in\R$, $i=1,\ldots,n$, 
such that for all $u=(u_1,\ldots,u_n)\in(0,\infty)^n$, inequality \eqref{1.f} holds.

\item[\rm (H5')] The weak cross-diffusion condition \eqref{1.wcd}
holds and $\pi_i=1$ for all $i=1,\ldots,n$.

\item[\rm (H5'')] The detailed-balance condition \eqref{1.dbc} and either
$a_{i0}>0$ for all $i=1,\ldots,n$ or $a_{ii}>0$ for all $i=1,\ldots,n$ hold.
\end{description}

\begin{definition}[Renormalized solution]
We call $u=(u_1,\ldots,u_n)$ a {\em renormalized solution} to 
\eqref{1.eq}-\eqref{1.bic} if for all $T>0$, $u_i\in L^2(0,T;H^1(\Omega))$
or $\sqrt{u_i}\in L^2(0,T;H^1(\Omega))$,
and for any $\xi\in C^\infty([0,\infty)^n)$ satisfying 
$\xi'\in C_0^\infty([0,\infty)^n;\R^n)$ and  
$\phi\in C_0^\infty(\overline\Omega\times[0,T))$, it holds that
\begin{align}
  -\int_0^T\int_\Omega & \xi(u)\pa_t\phi dxdt - \int_\Omega\xi(u^0)\phi(\cdot,0)dx 
	\nonumber \\
	&= -\sum_{i,k=1}^n\int_0^T\int_\Omega\pa_i\pa_k\xi(u)
	\bigg(\sum_{j=1}^n A_{ij}(u)\na u_j-u_ib_i\bigg)\cdot\na u_k\phi dxdt 
	\label{1.renorm} \\
	&\phantom{xx}{}- \sum_{i=1}^n\int_0^T\int_\Omega\pa_i\xi(u)
  \bigg(\sum_{j=1}^n A_{ij}(u)\na u_j-u_ib_i\bigg)\cdot\na\phi dxdt \nonumber \\
  &\phantom{xx}{}+ \sum_{i=1}^n\int_0^T\int_\Omega\pa_i\xi(u)f_i(u)\phi dxdt. \nonumber
\end{align}
\end{definition}

In the definition, $\xi'$ is the gradient of $\xi$ and $\pa_i\xi(u)=\pa\xi/\pa x_i$ 
the $i$th partial derivative. Note that $\xi'$ is assumed to have compact support,
so all integrals are well defined. 
If $\sqrt{u_i}\in L^2(0,T;H^1(\Omega))$, the expression $A_{ij}(u)\na u_j$ is
interpreted as $2A_{ij}(u)\sqrt{u_j}\na\sqrt{u_j}$.

The main result reads as follows.

\begin{theorem}[Global existence]\label{thm.ex}
Let (H1)-(H4) and either (H5') or (H5'') hold. Then there exists a renormalized
solution $u=(u_1,\ldots,u_n)$ to \eqref{1.eq}-\eqref{1.A}
satisfying $u_i\ge 0$ in $\Omega$ and $\int_\Omega h(u(t))dx<\infty$
for all $t>0$.
\end{theorem}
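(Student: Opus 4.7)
My plan is to construct renormalized solutions as limits of weak solutions to problems with truncated reactions, in the spirit of \cite{Fis15} but coupled to the boundedness-by-entropy framework of \cite{CDJ17}. For $M\in\N$, pick $\psi_M\in C_c^\infty([0,\infty))$ with $0\le\psi_M\le 1$ and $\psi_M\equiv 1$ on $[0,M]$, and set $f_i^M(u):=f_i(u)\psi_M(|u|)$. Then $f^M$ is bounded and continuous, preserves (H4) pointwise (since $\psi_M\ge 0$ does not flip the sign in \eqref{1.f}), and automatically satisfies the linear-growth condition \eqref{1.f2}. Under (H5') or (H5''), the results of \cite{CDJ17} then produce a nonnegative weak solution $u^M=(u_1^M,\ldots,u_n^M)$ of \eqref{1.eq}--\eqref{1.bic} with $f$ replaced by $f^M$, obeying the entropy inequality \eqref{1.epi1} or \eqref{1.epi2} with constants independent of $M$.

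\textbf{A priori bounds and compactness.} The $M$-uniform entropy inequality gives $h(u^M)$ bounded in $L^\infty(0,T;L^1(\Omega))$, $\sqrt{u_i^M}$ bounded in $L^2(0,T;H^1(\Omega))$ whenever $a_{i0}>0$, and $u_i^M$ bounded in $L^2(0,T;H^1(\Omega))$ whenever $a_{ii}>0$ or (H5') holds. To obtain a time-derivative bound that does not see the uncontrolled reaction, I renormalize on the approximate level: for $\eta\in C_c^\infty([0,\infty))$, the chain rule yields $\pa_t\eta(u_i^M)=\eta'(u_i^M)\pa_t u_i^M$, and the term $\eta'(u_i^M)f_i^M(u^M)$ is bounded in $L^\infty(Q_T)$ uniformly in $M$ since $\mathrm{supp}\,\eta'$ is compact and $f_i$ is continuous there. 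This makes $\pa_t\eta(u_i^M)$ uniformly bounded in $L^2(0,T;(H^s(\Omega))')$ for $s$ large, so Aubin--Lions gives strong $L^2(Q_T)$ convergence of $\eta(u_i^M)$; varying $\eta$ and using the entropy bound to rule out mass escape to infinity yields $u_i^M\to u_i$ a.e.\ and in $L^p(Q_T)$ for all $p$ allowed by the entropy, together with $\na u_j^M\rightharpoonup\na u_j$ or $\na\sqrt{u_j^M}\rightharpoonup\na\sqrt{u_j}$ weakly in $L^2$. Fatou transfers the entropy bound to $u$.

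\textbf{Limit passage and the main obstacle.} Fix a test pair $(\xi,\phi)$ with $\mathrm{supp}\,\xi'\subset K$ compact in $[0,\infty)^n$. For $M$ large enough that $\psi_M\equiv 1$ on a neighborhood of $K$, the truncation drops out of every term weighted by $\xi'$, and on $K$ the coefficients $A_{ij}$ are bounded with $A_{ij}(u^M)\to A_{ij}(u)$ strongly in $L^p$. The first-order term $\pa_i\xi(u^M)A_{ij}(u^M)\na u_j^M\cdot\na\phi$ and the drift contribution therefore pass to the limit as products of a strongly and a weakly convergent factor (under (H5'') with $a_{ii}=0$ one rewrites $A_{ij}(u)\na u_j=2A_{ij}(u)\sqrt{u_j}\na\sqrt{u_j}$, as permitted by the definition). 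The reaction term converges by dominated convergence, since $\pa_i\xi(u^M)f_i(u^M)$ is uniformly bounded on $K$ and converges a.e. The main obstacle is the second-order quadratic term
\[
\sum_{i,j,k}\int_0^T\int_\Omega\pa_i\pa_k\xi(u^M)A_{ij}(u^M)\na u_j^M\cdot\na u_k^M\,\phi\,dx\,dt,
\]
which involves the product of two only weakly convergent gradients. In general, the weak-$*$ limit in measures equals the expected integrand plus a nonnegative defect measure. Following Fischer's argument, the rescue comes from the entropy dissipation: under (H5') or (H5''), $\na u:h''(u)A(u)\na u$ is controlled in $L^1(Q_T)$ with $h''(u)A(u)$ symmetric positive definite on $K$, and one chooses $\xi$ so that the quadratic form $\pa_i\pa_k\xi(u)A_{ij}(u)\cdot$ is dominated by $h''(u)A(u)$; this forces the defect to be absolutely continuous with respect to a controlled measure and, via a truncation argument on $\xi$, to vanish. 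Propagating this entropy-dissipation structure through the weak limit is the technical heart of the proof and the place where the two structural assumptions (H5') and (H5'') must be exploited, each through its own symmetrization of $h''(u)A(u)$.
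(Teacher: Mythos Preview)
Your compactness step has a concrete error: with a scalar cutoff $\eta\in C_c^\infty([0,\infty))$ depending only on $u_i^M$, the compact support of $\eta'$ bounds only the $i$th component, while $f_i$ depends on the full vector $u^M$. Hence $\eta'(u_i^M)f_i^M(u^M)$ is \emph{not} uniformly bounded in $M$, and the time-derivative estimate fails. This is repairable by truncating in the full vector (which is exactly what the paper's $\varphi_i^L$ does), but as written the argument breaks.

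The serious gap is your treatment of the quadratic gradient term. What you describe is not Fischer's mechanism and does not close: for a \emph{fixed} $\xi$ the defect measure arising from $\sum_{i,j,k}\pa_i\pa_k\xi(u^M)A_{ij}(u^M)\na u_j^M\cdot\na u_k^M$ has no reason to vanish; domination by the entropy dissipation only says it is a bounded measure, not that it equals the expected limit, and $\xi$ is given, not chosen. The correct argument---both in \cite{Fis15} and in this paper---is a two-scale procedure. One first tests the approximate equation with a \emph{specific} family $\varphi_i^L(u)$ (equal to $u_i$ where $\sum_k u_k<L$, constant where $\sum_k u_k>2L$) and passes $M\to\infty$; the resulting defect $\mu_i^L$ has total variation controlled by quantities of the form $\sup_v(1+|v|)|\pa_j\pa_k\varphi_i^L(v)|$ times the $L^2$ gradient bound, and the key scaling of $\varphi_i^L$ ensures $\sup_v|\pa_j\pa_k\varphi_i^L(v)|\to 0$ as $L\to\infty$ while $(1+|v|)|\pa_j\pa_k\varphi_i^L(v)|$ remains uniformly bounded, so that $|\mu_i^L|\to 0$. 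Only \emph{then} does one apply the approximate chain rule (Lemma~\ref{lem.chain}) for an arbitrary $\xi$ to the equation satisfied by $\varphi^L(u)$, with an error of size $|\mu_i^L|$; for $L$ large one has $\varphi^L(u)=u$ on $\operatorname{supp}\xi'$, and the limit $L\to\infty$ yields the renormalized formulation with no remaining defect. Your sketch collapses these two levels into one and therefore cannot eliminate the defect.
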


The proof is based on the entropy method of \cite{Jue15} and the
approximation scheme of \cite{Fis15}. In the following, we sketch the key ideas.

{\em Step 1: Approximation scheme.} Introducing the entropy variable
$w=(w_1,\ldots,w_n)$ with $w_i=\pa h/\pa u_i=\log u_i+\lambda_i$, equations
\eqref{1.eq} can be equivalently written as
\begin{equation}\label{1.B}
  \pa_t u(w) - \diver(B\na w-u(w)b) = f(u(w)),
\end{equation}
where $B=A(u)h''(u)^{-1}$ is positive definite
if condition \eqref{1.wcd} or \eqref{1.dbc} is assumed, and 
$u(w)=(h')^{-1}(w)$ with components $u_i(w)=\exp(w_i-\lambda_i)$ is interpreted
as a function of the entropy variable $w$.
System \eqref{1.B} is approximated by an implicit Euler scheme with time step 
$\tau>0$, an elliptic regularization of the type $\eps((-\Delta)^m w+w)$
for some $\eps>0$ and $m\in\N$, and a regularized reaction term
$$
  f_i^\delta(u) = \frac{f_i(u)}{1+\delta|f(u)|}
$$
with parameter $\delta>0$. The Euler scheme avoids
issues with the (low) time regularity; the elliptic regularization yields the
regularity $w\in H^m(\Omega)\hookrightarrow L^\infty(\Omega)$ if $m>d/2$;
and the regularized reaction term is bounded, which allows us to apply the
entropy method of \cite{Jue15}. 

{\em Step 2: Limit $(\eps,\tau)\to 0$.}
A discrete version of the entropy-production inequality \eqref{1.epi1} 
or \eqref{1.epi2} yields
estimates uniform in $\tau$ and $\eps$ but not in $\delta$. Using the Aubin-Lions
lemma, we can pass to the limit $\eps=\tau\to 0$, and we infer the strong
convergence $u^{(\tau)}\to u$ in $L^1(\Omega\times(0,T))$. In order to pass
to the limit in equations \eqref{1.eq} with the right-hand side replaced by
$f_i^\delta(u)$, we need to distinguish between the
cases $a_{ii}>0$, which yields uniform estimates for $\na u_i^{(\tau)}$,
and $a_{i0}>0$, which gives estimates only for $\na(u_i^{(\tau)})^{1/2}$.
In the latter case, we need to exploit the uniform bound for 
$\na(u_i^{(\tau)}u_j^{(\tau)})^{1/2}$ to be able to pass
to the limit in the expression $A_{ij}(u^{(\tau)})\na u_j^{(\tau)}$.

{\em Step 3: Limit $\delta\to 0$.}
For the limit $\delta\to 0$, we proceed as in \cite{Fis15}.
The idea is to truncate $u_i$ by a smooth function $\varphi_i^L(u)$ that
equals $u_i$ if $\sum_{j=1}^n u_j<L$ and which is constant if 
$\sum_{j=1}^n u_j>2L$.
A discrete version of the entropy-production inequality
\eqref{1.epi1} or \eqref{1.epi2}
(i.e.\ using the test function of the type 
$\sum_{j=1}^n\pa_j\varphi_i^L(u^{(\delta)})\psi$, where $\pa_j=\pa/\pa u_j$
and $\psi\in C^\infty$) gives $\delta$-uniform estimates. 
The integral involving the reaction term
$$
  \sum_{j=1}^n\pa_j\varphi_i^L(u^{(\delta)})
	\frac{f_i(u^{(\delta)})\psi}{1+\delta|f(u^{(\delta)})|}
$$
can be bounded independently of $\delta$ since the support of 
$\pa_j\varphi_i^L$ is bounded. The Aubin-Lions lemma shows that a subsequence of
$(\varphi_i^L(u^{(\delta)}))$ converges strongly in $L^2$ to some function $v_i^L$; 
see the proof of the key Lemma \ref{lem.limdelta}. 
By a diagonal argument, which will be made explicit in the proof of 
Lemma \ref{lem.limdelta}, the subsequence is independent of $L$. The properties of
$\varphi_i^L$ allow us to prove that $u_i^{(\delta)}\to u_i$ a.e.\ in 
$\Omega\times(0,T)$.

The limit $\delta\to 0$ in the equations satisfied by $u^{(\delta)}$ 
yields a defect measure due to the integral involving quadratic gradients of
$u_i^{(\delta)}$ (or $(u_i^{(\delta)})^{1/2}$), 
\begin{equation}\label{1.grad}
  \int_0^T\int_\Omega\pa_j\pa_k\varphi_i^L(u^{(\delta)})A_{j\ell}(u^{(\delta)})
	\na u_\ell^{(\delta)}\cdot\na u_k^{(\delta)}\psi dxdt,
\end{equation}
where $\psi$ is some test function.
As $\delta\to 0$, this expression converges (up to a subsequence) to
$\int_0^T\int_\Omega\psi d\mu_i^L(x,t)$, where $\mu_i^L$ is a signed Radon
measure. 

{\em Step 4: Limit $L\to\infty$.}
It turns out (as in \cite{Fis15}) that $\mu_i^L$ converges weak* to zero as 
$L\to\infty$ in the sense of measures. This follows since the squared gradient in 
\eqref{1.grad} is uniformly bounded, which is a consequence
of the entropy-production inequality \eqref{1.epi1} or \eqref{1.epi2}, and 
$\|\pa_j\pa_k\varphi_i^L\|_{L^\infty}$ converges to zero as $L\to\infty$.
Then we take
$\xi(\varphi_i^L(u))$ as a test function in the equations satisfied by $u_i$,
where the gradient of $\xi\in C^\infty$ has a compact support, and pass to
the limit $L\to\infty$ in the equations. For this step, we use the chain-rule
lemma of \cite{Fis15}.

Compared to \cite{Fis15}, we allow for strongly coupled reaction-diffusion equations
with indefinite diffusion matrices and density-dependent coefficients $A_{ij}(u)$.
Crucial is the linear dependence of $A_{ij}(u)$ on $u_k$.
In fact, when the diffusion coefficients are nonlinear, say of type $u_k^m$ with
$m>0$, we need to define a different entropy density, $h_i(s)=s^m$.
We believe that the existence of renormalized solutions can be proved also 
in this situation, using the ideas of \cite{CDJ17,LeMo17}. 
We expect that the proof can be also extended to cross-diffusion systems
of volume-filling type \cite{Jue16}. Indeed, in this case, the solutions are
bounded such that the expression $A_{ij}(u)\na u_j$ can be easily defined.
Since our proof is already quite technical, we focus on the population model
with linear diffusion coefficients and 
leave details for more general models to the reader.
Finally, we mention that mixed Dirichlet-Neumann boundary conditions may be
treated as well, as long as the entropy-production inequality can be shown;
we refer to \cite{Fis15} for details in the diagonal case.

The paper is organized as follows. The existence of a weak solution to
the approximate problem is shown in Section \ref{sec.approx}. Section \ref{sec.est}
is concerned with the derivation of estimates uniform in $(\eps,\tau)$.
The limit $(\eps,\tau)\to 0$ is proved in Section \ref{sec.epstau},
while the limits $\delta\to 0$ and $L\to\infty$ as well as the proof of Theorem
\ref{thm.ex} are performed in Section \ref{sec.delta}.


\section{Existence for an approximate problem}\label{sec.approx}

Let $T>0$, $N\in\N$, set $\tau=T/N$, and let $\delta>0$, and $m\in\N$ with $m>d/2$. 
Then the embedding $H^m(\Omega)\hookrightarrow L^\infty(\Omega)$ is compact.
Let Hypothesis (H3) on the initial datum hold.
To obtain strictly positive initial data, we need to truncate. For this,
let $0<\eps<\min\{1,e^{-\lambda_1},\ldots,
e^{-\lambda_n}\}$ and introduce the cut-off function
$$
  Q_\eps(y) = \left\{\begin{array}{r@{\quad\mbox{if }}l} 
	\eps & 0\le y<\eps,\\
  y & \eps\le y< \eps^{-1},\\
  \eps^{-1} & y\geq \eps^{-1}.
  \end{array}\right.
$$
A computation shows that $h_i(Q_\eps(y))\le e^{-\lambda_i} + h_i(y)$ and
$\lim_{\eps\to 0}Q_\eps(y)=y$ for all $y\ge 0$. We set $u^0_\eps:=
(Q_\eps(u_1^0),\ldots,Q_\eps(u_n^0))$. Then $u^0_\eps(x)\in[\eps,\eps^{-1}]^n$
for $x\in\Omega$, and $w^0=h'(u^0_\eps)\in L^\infty(\Omega;\R^n)$ is
well-defined.

Let $k\ge 1$ and let $w^{k-1}\in L^\infty(\Omega;\R^n)$ be given.
We wish to find $w^k\in H^m(\Omega;\R^n)$ such that for all 
$\phi=(\phi_1,\ldots,\phi_n)\in H^m(\Omega;\R^n)$,
\begin{equation}\label{2.ted}
\begin{aligned}
  \frac{1}{\tau}\int_\Omega & \big(u(w^k)-u(w^{k-1})\big)\cdot\phi dx
	+ \int_\Omega\na\phi:B(w^k)\na w^k dx 
	- \int_\Omega\sum_{i=1}^n u_i(w^k)b_i^k\cdot \na\phi_i dx \\
	&{}+ \eps\int_\Omega\bigg(\sum_{|\alpha|=m}D^\alpha w^k\cdot D^\alpha\phi
	+ w^k\cdot\phi\bigg)dx 
	= \int_\Omega \frac{f(u(w^k))\cdot\phi}{1+\delta |f(u(w^k))|} dx.
\end{aligned}
\end{equation}
Here, $\alpha=(\alpha_1,\ldots,\alpha_n)
\in\N_0^n$ with $|\alpha|=\alpha_1+\cdots+\alpha_n=m$ is a multiindex, and
$D^\alpha=\pa^{|\alpha|}/(\pa x_1^{\alpha_1}\cdots\pa x_n^{\alpha_n})$ is a partial
derivative of order $|\alpha|$. Moreover, 
$u(w^k)=(h')^{-1}(w^k)$ (i.e.\ $u_i(w^k)=\exp(w_i^k-\lambda_i)$),
$B(w^k)=A(u(w^k))h''(u(w^k))^{-1}$, and $b^k=\tau^{-1}\int_{(k-1)\tau}^{k\tau}
b(\cdot,t)dt$. In particular, it follows from (H1) that
$$
  \|b^k\|_{L^\infty(\Omega)} \le \frac{1}{\tau}\int_{(k-1)\tau}^{k\tau}
	\|b(\cdot,t)\|_{L^\infty(\Omega)}dt \le \|b\|_{L^\infty(Q_T)},
$$
where we have set $Q_T=\Omega\times(0,T)$.
First, we show that there exists a solution to \eqref{2.ted}.

\begin{lemma}\label{lem.ted}
Let (H1), (H2), (H4), and either (H5') or (H5'') hold. In case (H5'') with $a_{i0}>0$ for
all $i$, we choose $\tau>0$ sufficiently small.
Then there exists a solution $w^k\in H^m(\Omega;\R^n)$ to \eqref{2.ted}.
\end{lemma}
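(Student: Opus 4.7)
I would solve \eqref{2.ted} by combining a Leray--Schauder fixed-point argument with Lax--Milgram for a linearized problem. Define the map $F:L^\infty(\Omega;\R^n)\times[0,1]\to L^\infty(\Omega;\R^n)$, $(v,\sigma)\mapsto w$, where $w\in H^m(\Omega;\R^n)$ is obtained by freezing every nonlinear occurrence of $w^k$ in \eqref{2.ted} at $v$ and scaling the inhomogeneous data (the term $u(w^{k-1})$, the drift, and the reaction) by $\sigma$, so that $\sigma=0$ forces $w\equiv 0$. Under (H5') or (H5''), $B(v)$ is bounded and positive semi-definite, and the resulting bilinear form on $H^m(\Omega;\R^n)$ is continuous and coercive thanks to the $H^m$-regularization term; Lax--Milgram then delivers a unique $w$. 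The compact Sobolev embedding $H^m(\Omega)\hookrightarrow L^\infty(\Omega)$, valid since $m>d/2$, makes $F$ well-defined, continuous in $v$, and compact.

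The non-trivial ingredient is a uniform bound on any fixed point $w=F(w,\sigma)$. I would derive it by testing with $\phi=w$ and exploiting convexity of $h$: since $w=h'(u(w))$, the Legendre identity $u\cdot h'(u)=h(u)+h^*(h'(u))$ with $h^*\ge 0$ gives $(u(w)-\sigma u(w^{k-1}))\cdot w\ge h(u(w))-\sigma h(u(w^{k-1}))$ pointwise. The diffusion term becomes $\na w:B(w)\na w=\na u(w):h''(u(w))A(u(w))\na u(w)$, which is bounded below by the nonnegative dissipation displayed in \eqref{1.epi1} or \eqref{1.epi2}. The reaction integrand, after weighting the $i$-th test component by $\pi_i$, is exactly $\sum_i\pi_i f_i^\delta(u(w))(\log u_i+\lambda_i)$; since $f_i^\delta$ has the same sign as $f_i$ and the pointwise bound $|f^\delta|\le\delta^{-1}$ holds, assumption (H4) together with this regularization yields a harmless bound. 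Combining these estimates with the coercive $\eps\|w\|_{H^m}^2$ contribution produces the desired $H^m$-bound, after which Leray--Schauder at $\sigma=1$ furnishes $w^k$.

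The single delicate point, and the reason for the smallness assumption on $\tau$ in the subcase of (H5'') with $a_{i0}>0$ but $a_{ii}=0$, is the treatment of the drift. Rewriting $\sum_i u_i(w)b_i^k\cdot\na w_i=\sum_i b_i^k\cdot\na u_i(w)$, one sees that in this degenerate regime the dissipation controls only $\na\sqrt{u_i(w)}$, and the identity $|\na u_i|=2\sqrt{u_i}\,|\na\sqrt{u_i}|$ forces the estimate $\int_\Omega|b_i^k\cdot\na u_i(w)|dx\le 2\|b_i^k\|_{L^\infty}\|\sqrt{u_i(w)}\|_{L^2}\|\na\sqrt{u_i(w)}\|_{L^2}$. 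Young's inequality absorbs the gradient factor into the dissipation at the cost of a term proportional to $\|u_i(w)\|_{L^1}$, which is itself bounded by the entropy via $s\le h_i(s)+C$. After these absorptions the a priori estimate reduces to $(1-C\tau)\int_\Omega h(u(w))dx\le\int_\Omega h(u(w^{k-1}))dx+C\tau$, closing only when $\tau$ is taken small enough to make $1-C\tau>0$. In cases (H5') or (H5'') with $a_{ii}>0$, the dissipation dominates $\na u_i(w)$ directly in $L^2$, the drift is absorbed without any size restriction, and no smallness of $\tau$ is needed.
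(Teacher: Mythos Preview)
Your proof is essentially identical to the paper's: Lax--Milgram for a linearized frozen problem, compactness via $H^m\hookrightarrow L^\infty$, Leray--Schauder for the fixed point, and the entropy test $\phi=w$ with exactly the same case distinction for the drift (including the absorption into $(1-C\tau)\int_\Omega h(u(w))\,dx$ that forces $\tau$ small in the $a_{i0}>0$ subcase).

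One small inconsistency worth cleaning up: you assert that $\sigma=0$ forces $w\equiv0$, yet by your own description only $u(w^{k-1})$, the drift, and the reaction carry the factor~$\sigma$, so the frozen contribution $\tau^{-1}\int_\Omega u(v)\cdot\phi\,dx$ survives at $\sigma=0$ and $F(v,0)\neq0$ in general. The paper's homotopy is instead $w=\sigma\Phi(w)$, i.e.\ the \emph{entire} linear functional (including the $u(\bar w)$ term) is scaled; then $\sigma=0$ trivially gives $w=0$, and the direct convexity inequality $h(y)-h(z)\le h'(y)\cdot(y-z)$ replaces your Legendre--Fenchel identity to yield $-\sigma(u(w)-u(w^{k-1}))\cdot w\le-\sigma\bigl(h(u(w))-h(u(w^{k-1}))\bigr)$. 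For the reaction term no auxiliary bound $|f^\delta|\le\delta^{-1}$ is needed: since every component of $f^\delta$ shares the common positive denominator $1+\delta|f|$, hypothesis~(H4) gives the sign condition $\sum_i\pi_i f_i^\delta(u)(\log u_i+\lambda_i)\le0$ immediately.
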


\begin{proof}
We follow the lines of \cite{CDJ17} but some estimates simplify. For clarity,
we present the full proof.

{\em Step 1.} Let (H5') hold and let $\bar w\in L^\infty(\Omega;\R^n)$. 
We claim that there exists
a unique solution $w=(w_1,\ldots,w_n)\in H^m(\Omega;\R^n)$ to the linear problem
\begin{equation}\label{2.LM}
  a(w,\phi) = F(\phi)\quad\mbox{for all }\phi\in H^m(\Omega;\R^n),
\end{equation}
where
\begin{align*}
  a(w,\phi) &= \tau\int_\Omega\na\phi:B(\bar w)\na w dx
	+ \eps\tau\int_\Omega\bigg(\sum_{|\alpha|=m}D^\alpha w\cdot D^\alpha\phi
	+ w\cdot\phi\bigg)dx, \\
	F(\phi) &= -\int_\Omega\big(u(\bar w)-u(w^{k-1})\big)\cdot\phi dx
	+ \tau\int_\Omega\sum_{i=1}^n u_i(\bar w)b_i^k\cdot\na\phi_i dx \\
	&\phantom{xx}{}+ \tau\int_\Omega\frac{f(u(\bar w))\cdot\phi}{1+\delta|f(u(\bar w))|}dx.
\end{align*}
Since $\bar w$, $w^{k-1}\in L^\infty(\Omega;\R^n)$, we have
$u_i(\bar w)=\exp(\bar w_i-\lambda_i)$, $u_i(w^{k-1})\in L^\infty(\Omega)$.
Then each coefficient of $B(\bar w)$,
$$
  B_{ij}(\bar w) = \delta_{ij}\bigg(a_{i0}+\sum_{k=1}^n a_{ik}u_k(\bar w)\bigg)
	u_i(\bar w) + a_{ij} u_i(\bar w)u_j(\bar w),
$$
is bounded. In view of $b^k_i\in L^\infty(\Omega;\R^n)$, it follows that
the bilinear form $a$ and the linear form $F$ are bounded. The matrix $B(\bar w)$
is positive semidefinite since $h''(u)A(u)$ is positive semidefinite,
by Lemma 6 in \cite{CDJ17}. Consequently, for all $z\in\R^n$,
$$
  z^T B(\bar w)z = \big(h''(u(\bar w))^{-1}z\big)^T h''(u(\bar w))A(u(\bar w))
  \big(h''(u(\bar w))^{-1}z\big) \ge 0.
$$
Hence, we infer from the generalized Poincar\'e inequality 
\cite[Chapter 2, Section 1.4]{Tem97} that the bilinear form $a$ is coercive,
$$
  a(w,w) \ge \tau\eps\int_\Omega\bigg(\sum_{|\alpha|=m}|D^\alpha w|^2
	+ |w|^2\bigg)dx \ge \tau\eps C\|w\|_{H^m(\Omega)}^2
$$
for $w\in H^m(\Omega;\R^n)$. By the Lax-Milgram lemma, we conclude the existence
of a unique solution $w\in H^m(\Omega;\R^n)$ to \eqref{2.LM}. 

{\em Step 2.} Define the mapping $\Phi:L^\infty(\Omega;\R^n)\to L^\infty(\Omega;\R^n)$
by $\Phi(\bar w)=w$, where $w\in H^m(\Omega;\R^n)$ is the unique solution to
\eqref{2.LM}. Standard arguments (see, for instance, the proof of Lemma 5
in \cite{Jue15}), together with Hypothesis (H2), 
show that $\Phi$ is continuous. Then the compactness of the
embedding $H^m(\Omega)\hookrightarrow L^\infty(\Omega)$ implies the compactness
of $\Phi$. In order to apply the Leray-Schauder fixed-point theorem
\cite[Theorem 10.3]{GiTr01}, it remains to show that the set
$\Lambda=\{w\in L^\infty(\Omega;\R^n):w=\sigma \Phi(w)$ for some $\sigma\in(0,1]\}$
is bounded in $L^\infty(\Omega;\R^n)$.

Let $w\in\Lambda$. Then $a(w,\phi)=\sigma F(\phi)$ for all $\phi\in H^m(\Omega;\R^n)$,
with $\bar w$ is replaced by $w$.
Taking $\phi=w$ as a test function, we find that 
\begin{align}
  \tau\int_\Omega & \na w:B(w)\na w dx + \eps\tau\int_\Omega\bigg(
	\sum_{|\alpha|=m}|D^\alpha w|^2 + |w|^2\bigg) \nonumber \\
  &= -\sigma\int_\Omega\big(u(w)-u(w^{k-1})\big)\cdot wdx
	+ \sigma\tau\int_\Omega\frac{f(u(w))\cdot w}{1+\delta|f(u(w))|}dx \label{2.aux} \\
	&\phantom{xx}{}+ \sigma\tau\int_\Omega\sum_{i=1}^n u_i(w)b_i^k\cdot\na w_i dx.
	\nonumber
\end{align}
We estimate both sides term by term.

The identities $B(w)=A(u(w))h''(u(w))^{-1}$ and $\na w=h''(u(w))\na u(w)$ imply that
$$
  \tau\int_\Omega\na w:B(w)\na w dx
	= \tau\int_\Omega\na u(w):h''(u(w))A(u(w))\na u(w)dx.
$$
It is shown in \cite[Lemma 6]{CDJ17} that the matrix $h''(u(w))A(u(w))$ is 
positive definite:
$$
  J_1 := \tau\int_\Omega\na w:B(w)\na w dx 
	\ge 2\eta\tau\int_\Omega\sum_{i=1}^n|\na u_i(w)|^2 dx
$$
for some constant $\eta>0$.
The convexity of $h$ implies that $h(y)-h(z)\le h'(y)\cdot(y-z)$ for all
$x$, $y\in(0,\infty)^n$. Choosing $y=u(w)$, $z=u(w^{k-1})$ and using the
property $h'(u(w))=w$, we infer that
$$
  -\sigma\int_\Omega\big(u(w)-u(w^{k-1})\big)\cdot wdx
	\le -\sigma\int_\Omega\big(h(u(w))-h(u(w^{k-1}))\big)dx.
$$
By Hypothesis (H4), we have
$$
  \sigma\tau\int_\Omega\frac{f(u(w))\cdot w}{1+\delta|f(u(w))|}dx
	= \sigma\tau\int_\Omega\sum_{i=1}^n\frac{f_i(u(w))(\log u_i(w)+\lambda_i)}{
	1+\delta|f(u(w))|}dx \le 0.
$$

It remains to estimate the last integral in \eqref{2.aux}.
We use the Cauchy-Schwarz inequality to find that
\begin{align*}
  J_2 &:= \sigma\tau\int_\Omega \sum_{i=1}^n u_i(w)b_i^k\cdot\na w_i dx
	=  \sigma\tau\int_\Omega \sum_{i=1}^n b_i^k\cdot\na u_i(w) dx \\
	&\le \eta\tau\int_\Omega\sum_{i=1}^n|\na u_i(w)|^2 dx
	+ \frac{\tau}{4\eta}\|b\|_{L^\infty(Q_T)}^2.
\end{align*}
Hence, we conclude from \eqref{2.aux} that
\begin{equation}\label{2.epi1}
\begin{aligned}
  \sigma\int_\Omega h(u(w))dx
	&+ \eta\tau\int_\Omega\sum_{i=1}^n|\na u_i(w)|^2 dx
	+ \eps\tau\int_\Omega\bigg(\sum_{|\alpha|=m}|D^\alpha w|^2 + |w|^2\bigg) \\
	&\le \sigma\int_\Omega h(u(w^{k-1}))dx + C(\eta,b)\tau.
\end{aligned}
\end{equation}
Recalling that $u(w^{k-1})\in L^\infty(\Omega;\R^n)$, we infer
that $\int_\Omega h(u(w^{k-1}))dx<\infty$ and hence
$\|w\|_{L^\infty(\Omega)}$ $\le C\|w\|_{H^m(\Omega)}\le C(\eta,\eps,b,\tau)$,
which is the desired uniform bound. By the Leray-Schauder theorem,
there exists a solution $w^k:=w\in H^m(\Omega;\R^n)$ to \eqref{2.ted}.

{\em Step 3.}
Finally, let (H5'') hold. The proof is almost identical to the previous
steps except the estimation of $J_1$ and $J_2$. 
The matrix $h''(u)A(u)$ is positive definite
and for all $z\in\R^n$ (see \cite[Lemma 4]{CDJ17}),
$$
  z:h''(u)A(u)z \ge \sum_{i=1}^n\pi_i a_{i0}\frac{z_i^2}{u_i}
	+ 2\sum_{i=1}^n\pi_i a_{ii} z_i^2
	+ \frac12\sum_{i,j=1,\ i\neq j}^n\pi_i a_{ij}
	\bigg(\sqrt{\frac{u_j}{u_i}}z_i + \sqrt{\frac{u_i}{u_j}}z_j	\bigg)^2.
$$
Therefore, in case $a_{ii}>0$ for all $i=1,\ldots,n$, we can proceed as in the previous
step, obtaining inequality \eqref{2.epi1}. In case $a_{i0}>0$ for all $i=1,\ldots,n$,
there exists $\eta>0$, only depending on the coefficients $a_{ij}$, such that
$$
  J_1 \ge 4\eta\tau\int_\Omega\sum_{i=1}^n|\na u_i(w)^{1/2}|^2 dx
	+ 2\eta\tau\int_\Omega\sum_{i,j=1,\ i\neq j}^n
	\big|\na(u_i(w)u_j(w))^{1/2}\big|^2 dx.
$$

We use the Cauchy-Schwarz inequality and the elementary inequality
$u_i\le h_i(u_i) + C$ for some constant $C>0$ to infer that
\begin{align*}
  J_2 &= 2\sigma\tau\int_\Omega\sum_{i=1}^n u_i(w)^{1/2} b_i^k\cdot\na u_i(w)^{1/2} dx \\
	&\le \eta\tau\int_\Omega\sum_{i=1}^n|\na u_i(w)^{1/2}|^2 dx
	+ \frac{\sigma\tau}{\eta}\|b\|_{L^\infty(Q_T)}^2
	\int_\Omega\sum_{i=1}^n u_i(w)dx \\
	&\le \eta\tau\int_\Omega\sum_{i=1}^n|\na u_i(w)^{1/2}|^2 dx
	+  C(\eta,b)\sigma\tau\int_\Omega h(u(w))dx + C(\eta,b)\tau.
\end{align*}
Then \eqref{2.aux} can be written as
\begin{align}
  &\sigma(1-C(\eta,b)\tau) \int_\Omega h(u(w))dx 
	+ \eta\tau\int_\Omega\sum_{i=1}^n|\na u_i(w)^{1/2}|^2 dx \nonumber \\
	&\phantom{xx}{}+ 2\eta\tau\sum_{i,j=1\,i\neq j}^n\big|\na(u_i(w)u_j(w))^{1/2}
	\big\|_{L^2(\Omega)}^2 
	+ \eps\tau\bigg(\sum_{|\alpha|=m}\|D^\alpha w\|^2_{L^2(\Omega)} 
	+ \|w\|^2_{L^2(\Omega)}\bigg) \label{2.epi2} \\
	&\le \sigma\int_\Omega h(u(w^{k-1}))dx + C(\eta,b)\tau, \nonumber
\end{align}
and we obtain the desired $L^\infty$ bound for $w$ by choosing $\tau<1/C(\eta,b)$.
This ends the proof.
\end{proof}


\section{Uniform estimates}\label{sec.est}

The next step is the derivation of estimates which are uniform in the
approximation parameters. Let (H1)-(H4) and either (H5') or (H5'') hold.
Applying Lemma \ref{lem.ted} iteratively, we obtain
a sequence of solutions $w^k\in H^m(\Omega;\R^n)$ to \eqref{2.ted} with
$u^k:=u(w^k)\in L^\infty(\Omega;(0,\infty)^n)$ for $k=1,\ldots,N$. The first bounds
are a consequence of the discrete entropy estimate \eqref{2.epi1} or \eqref{2.epi2},
respectively.

\begin{lemma}\label{lem.epi}
{\rm (i)} Let (H5') or (H5'') with $a_{ii}>0$ for all $i=1,\ldots,n$ hold 
and let $k\in\{1,\ldots,N\}$. Then
\begin{align*}
  \int_\Omega h(u^k)dx 
	&+ \eta\tau\sum_{j=1}^k\sum_{i=1}^n\|\na u_i^j\|_{L^2(\Omega)}^2 \\
	&{}+ \eps\tau\sum_{j=1}^k\bigg(\sum_{|\alpha|=m}\|D^\alpha w^j\|_{L^2(\Omega)}^2
	+ \|w^j\|_{L^2(\Omega)}^2\bigg)
	\le C(u^0,b,T).
\end{align*} 
{\rm (ii)} Let $k\in\{1,\ldots,N\}$.
If (H5'') with $a_{i0}>0$ for all $i=1,\ldots,n$ holds, then for sufficiently small
$\tau>0$,
\begin{align*}
  \int_\Omega h(u^k)dx 
	&+ \eta\tau\sum_{j=1}^k\sum_{i=1}^n\|\na (u_i^j)^{1/2}\|_{L^2(\Omega)}^2 
	+ \eta\tau\sum_{j=1}^k\sum_{i,\ell=1,\,i\neq\ell}^n
	\|\na(u_i^ju_\ell^j)^{1/2}\|_{L^2(\Omega)}^2\\
	&{}+ \eps\tau\sum_{j=1}^k\bigg(\sum_{|\alpha|=m}\|D^\alpha w^j\|_{L^2(\Omega)}^2
	+ \|w^j\|_{L^2(\Omega)}^2\bigg)
	\le C(u^0,b,T).
\end{align*} 
In both cases, $\eta>0$ and $C(u^0,b,T)>0$ are constants which
are independent of $\delta$, $\eps$, and $\tau$.
\end{lemma}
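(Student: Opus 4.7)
The plan is to iterate the single-step entropy inequalities \eqref{2.epi1} and \eqref{2.epi2} already established in the proof of Lemma \ref{lem.ted}. Because $w^k$ is a genuine solution of \eqref{2.ted} rather than a fixed point on the Leray--Schauder continuation path, testing \eqref{2.ted} with $\phi=w^k$ reproduces those computations with $\sigma=1$ and $\bar w$ replaced by $w^k$. Hence \eqref{2.epi1} holds in the setting of part (i)---the case (H5'') with $a_{ii}>0$ also falls here via the $2\sum_i\pi_i a_{ii}z_i^2$ term in the $J_1$ lower bound recalled in Step 3 of that proof---while \eqref{2.epi2} holds in the setting of part (ii). A preliminary point is that $H^0:=\int_\Omega h(u^0_\eps)dx$ is bounded independently of $\eps$: the estimate $h_i(Q_\eps(y))\le h_i(y)+e^{-\lambda_i}$ recorded in Section \ref{sec.approx} together with Hypothesis (H3) gives $H^0\le \int_\Omega h(u^0)dx+C|\Omega|$.

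For part (i), the single-step inequality takes the telescoping form
\[\int_\Omega h(u^j)dx+\eta\tau\sum_{i=1}^n\|\na u_i^j\|_{L^2(\Omega)}^2+\eps\tau\bigg(\sum_{|\alpha|=m}\|D^\alpha w^j\|_{L^2(\Omega)}^2+\|w^j\|_{L^2(\Omega)}^2\bigg)\le \int_\Omega h(u^{j-1})dx+C(\eta,b)\tau,\]
so summing over $j=1,\ldots,k$ telescopes the entropy and, using $k\tau\le T$ together with the $\eps$-uniform bound on $H^0$, yields the desired estimate at once.

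For part (ii), the cruder bound on $J_2$ introduces an extra term $C(\eta,b)\tau H^j$ on the left, so the single-step inequality reads
\[(1-C\tau)H^j+\eta\tau G^j+\eps\tau R^j\le H^{j-1}+C\tau,\]
where $G^j$ collects the $\|\na(u_i^j)^{1/2}\|_{L^2(\Omega)}^2$ and $\|\na(u_i^j u_\ell^j)^{1/2}\|_{L^2(\Omega)}^2$ terms and $R^j$ denotes the elliptic regularization term. Choosing $\tau$ so small that $C\tau\le 1/2$ converts this into $H^j\le(1-C\tau)^{-1}(H^{j-1}+C\tau)$; a discrete Gronwall iteration then gives the uniform bound $H^j\le e^{2CT}(H^0+CT)$. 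Plugging this back into the sum over $j=1,\ldots,k$ of the step inequalities absorbs the troublesome $C\tau H^j$ terms on the right and yields the asserted bounds on the accumulated $G^j$ and $R^j$. I expect no real obstacle beyond this Gronwall bookkeeping: the constant $\eta$ depends only on the coefficients $a_{ij}$, the regularized reaction was rendered nonpositive via (H4) irrespective of $\delta$, and $C(u^0,b,T)$ involves only $\|b\|_{L^\infty(Q_T)}$, $H^0$, and $T$, so all constants are visibly independent of $\delta,\eps,\tau$.
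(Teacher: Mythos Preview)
Your proposal is correct and follows essentially the same approach as the paper: set $\sigma=1$ in \eqref{2.epi1} resp.\ \eqref{2.epi2}, sum over $j$, bound $\int_\Omega h(u_\eps^0)dx$ via $h_i(Q_\eps(y))\le h_i(y)+e^{-\lambda_i}$, and in case (ii) use the discrete Gronwall inequality after choosing $\tau$ small. The only cosmetic difference is the order of operations in part (ii): the paper sums first and then applies Gronwall to the resulting inequality, whereas you apply Gronwall to the single-step bound on $H^j$ and then sum; both routes give the same estimate.
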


\begin{proof}
(i) We have $a_{ii}>0$ for all $i=1,\ldots,n$. 
We take $\sigma=1$, $w=w^k$ in \eqref{2.epi1} and sum the equations. This yields
\begin{align*}
  \int_\Omega h(u^k)dx 
	&+ \eta\tau\sum_{j=1}^k\sum_{i=1}^n\|\na u_i^j\|_{L^2(\Omega)}^2 \\
	&{}+ \eps\tau\sum_{j=1}^k\bigg(\sum_{|\alpha|=m}\|D^\alpha w^j\|_{L^2(\Omega)}^2
	+ \|w^j\|_{L^2(\Omega)}^2\bigg) \le \int_\Omega h(u(w^0))dx + C(\eta,b)T.
\end{align*} 
Since $h_i(Q_\eps(y))\le e^{-\lambda_i}+h_i(y)$ on $[0,\infty)$, we have
$$
  h(u(w^0)) = h(u_\eps^0) = \sum_{i=1}^n h_i(Q_\eps(u_i^0))
	\le h(u^0) + C,
$$
concluding the proof.

(ii) Let $a_{i0}>0$ for $i=1,\ldots,n$. We choose $\sigma=1$, $\tau<1/(2C(\eta,b))$, 
and $w=w^k$ in \eqref{2.epi2}
and sum the equations, yielding
\begin{align*}
  (1&-C(\eta,b)\tau)\int_\Omega h(u^k)dx 
	+ \eta\tau\sum_{j=1}^k\sum_{i=1}^n\|\na (u_i^j)^{1/2}\|_{L^2(\Omega)}^2 \\
	&\phantom{xx}{}+ 2\eta\tau\sum_{i,\ell=1,\,i\neq \ell}^n
	\big\|\na(u_i^j u_\ell^j)^{1/2}\big\|_{L^2(\Omega)}^2 
	+ \eps\tau\sum_{j=1}^k\bigg(\sum_{|\alpha|=m}
	\|D^\alpha w^j\|_{L^2(\Omega)}^2 + \|w^j\|_{L^2(\Omega)}^2\bigg) \\
	&\le (1-C(\eta,b)\tau)\int_\Omega h(u(w^0))dx 
	+ C(\eta,b)\tau\sum_{j=1}^k\int_\Omega h(u^{j-1})dx	+ C(\eta,b)T \\
	&\le \int_\Omega h(u^0) + C(\eta,b)\tau\sum_{j=1}^k\int_\Omega h(u^{j-1})dx
	+ C(\eta,b)T.
\end{align*}
Observing that $1-C(\eta,b)\tau\ge 1/2$ and applying the discrete
Gronwall inequality, this proves (ii).
\end{proof}

We also need a uniform estimate for the discrete time derivative.

\begin{lemma}\label{lem.time}
Let (H1)-(H3) and (H5') or (H5'') hold. Then
\begin{equation}\label{2.time}
  \tau\sum_{k=1}^N\big\|\tau^{-1}(u^k-u^{k-1})\big\|_{H^{m+1}(\Omega)'}^r
	\le C(\delta,u^0,b,T),
\end{equation}
where $r=(d+2)/(d+1)$ if (H5') or (H5'') with $a_{ii}>0$ (case (i)) and
$r=(2d+2)/(2d+1)$ if (H5'') with $a_{i0}>0$ (case (ii)).
\end{lemma}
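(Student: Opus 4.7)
The strategy is to test the scheme \eqref{2.ted} against arbitrary $\phi\in H^{m+1}(\Omega;\R^n)$ of unit norm, bound every resulting integral in terms of quantities already controlled by Lemma~\ref{lem.epi}, and then close the $\ell^r_\tau$ bound using discrete Hölder together with a parabolic Gagliardo--Nirenberg interpolation. Because $m>d/2$, we have the continuous embedding $H^{m+1}(\Omega)\hookrightarrow W^{1,\infty}(\Omega)$, so such a $\phi$ satisfies $\|\phi\|_{L^\infty}+\|\nabla\phi\|_{L^\infty}\le C$; this is what lets us shift all regularity requirements onto $u^k$ via $L^1$-estimates of the fluxes.

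Testing \eqref{2.ted} and using $B(w^k)\nabla w^k=A(u^k)\nabla u^k$, the duality definition of $\|\cdot\|_{H^{m+1}(\Omega)'}$ gives
\[
\bigl\|\tau^{-1}(u^k-u^{k-1})\bigr\|_{H^{m+1}(\Omega)'}
\le C\Bigl(\|A(u^k)\nabla u^k\|_{L^1(\Omega)}+\|u^k\|_{L^1(\Omega)}+\eps\|w^k\|_{H^m(\Omega)}+\tfrac{1}{\delta}\Bigr),
\]
where the reaction piece is dominated by $|f^\delta|\le 1/\delta$. The drift contribution and the $1/\delta$ term are trivially summable (with $\delta$-dependent constant); the regularizer contributes $\tau\sum_k(\eps\|w^k\|_{H^m})^r\le \eps^{r/2}(\eps\tau\sum_k\|w^k\|_{H^m}^2)^{r/2}T^{1-r/2}$, which is bounded by Lemma~\ref{lem.epi}. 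Hence the entire problem reduces to bounding $\tau\sum_k\|A(u^k)\nabla u^k\|_{L^1(\Omega)}^r$ uniformly in $\eps,\tau$.

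For case (i), where Lemma~\ref{lem.epi}(i) provides $\nabla u_i^k\in\ell^2_\tau L^2(\Omega)$, the pointwise bound $|A(u)\nabla u|\le C(1+|u|)|\nabla u|$ followed by Cauchy--Schwarz gives $\|A(u^k)\nabla u^k\|_{L^1}\le C(1+\|u^k\|_{L^2})\|\nabla u^k\|_{L^2}$. With $r=(d+2)/(d+1)$ one has $2r/(2-r)=2(d+2)/d$, so discrete Hölder reduces the bound to establishing $\tau\sum_k\|u^k\|_{L^2}^{2(d+2)/d}\le C$. This is precisely the discrete analogue of the parabolic Sobolev interpolation $L^\infty_tL^1_x\cap L^2_tH^1_x\hookrightarrow L^{2(d+2)/d}(0,T;L^2_x)$, which holds because the entropy gives $\ell^\infty_\tau L^1(\Omega)$ control of $u^k$ and Lemma~\ref{lem.epi}(i) gives $\ell^2_\tau H^1(\Omega)$ control.

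For case (ii) with $a_{i0}>0$, only $\nabla\sqrt{u_i^k}$ and $\nabla\sqrt{u_i^ku_j^k}$ are controlled in $\ell^2_\tau L^2(\Omega)$, so the naive bound fails. The key algebraic observation is that the structure \eqref{1.A} of $A$ makes the flux a sum of total gradients: after relabeling the summation index,
\[
\sum_{j=1}^nA_{ij}(u)\nabla u_j
= a_{i0}\nabla u_i+\sum_{k=1}^na_{ik}\bigl(u_k\nabla u_i+u_i\nabla u_k\bigr)
= 2a_{i0}\sqrt{u_i}\nabla\sqrt{u_i}+2\sum_{k=1}^na_{ik}\sqrt{u_iu_k}\nabla\sqrt{u_iu_k},
\]
which is exactly the combination controlled by the entropy production \eqref{1.epi2}. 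Cauchy--Schwarz then yields $\|A(u^k)\nabla u^k\|_{L^1}\le C\sum_{i,k}\|\sqrt{u_i^ku_k^k}\|_{L^2}\|\nabla\sqrt{u_i^ku_k^k}\|_{L^2}+(\text{easier term in }\sqrt{u_i})$. For $r=(2d+2)/(2d+1)$ one computes $2r/(2-r)=(2d+2)/d$; using $\sqrt{u_iu_k}\le(u_i+u_k)/2$ to put $\sqrt{u_i^ku_k^k}$ into $\ell^\infty_\tau L^1(\Omega)\cap\ell^2_\tau H^1(\Omega)$, the parabolic interpolation gives $\sqrt{u_i^ku_k^k}\in\ell^{(2d+2)/d}_\tau L^{(2d+2)/d}(\Omega)$, and the bounded-domain embedding $\|\cdot\|_{L^2_x}\le C|\Omega|^{1/(2d+2)}\|\cdot\|_{L^{(2d+2)/d}_x}$ absorbs the spatial mismatch, closing the estimate.

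\textbf{Main obstacle.} The delicate point is case (ii): the flux $A(u)\nabla u$ is cubic in the densities and no $\nabla u_i$ estimate is available, so one must recognize that the specific form of \eqref{1.A} assembles the flux into gradients of products $u_iu_k$, and that the exponent $r=(2d+2)/(2d+1)$ is dictated by applying the $L^\infty_tL^1_x\cap L^2_tH^1_x$ embedding to $\sqrt{u_iu_k}$ rather than to $u_k$ directly (one $\sqrt{\phantom{u}}$ is lost per factor), which is why the time integrability exponent drops from $(d+2)/(d+1)$ to $(2d+2)/(2d+1)$.
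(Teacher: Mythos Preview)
Your proposal is correct and follows essentially the same approach as the paper: test against $\phi\in H^{m+1}\hookrightarrow W^{1,\infty}$, reduce to bounding $\tau\sum_k\|A(u^k)\nabla u^k\|_{L^1}^r$, and close via Gagliardo--Nirenberg interpolation from the entropy-production estimates of Lemma~\ref{lem.epi}; in particular, the crucial observation in case~(ii) that $\sum_j A_{ij}(u)\nabla u_j$ rewrites as a combination of $\nabla u_i$ and $\nabla(u_iu_k)$ is exactly the paper's device. The only organizational difference is that the paper applies H\"older directly in space--time ($\|\sqrt{u_iu_j}\,\nabla\sqrt{u_iu_j}\|_{L^r(Q_T)}\le\|\sqrt{u_iu_j}\|_{L^{2+2/d}(Q_T)}\|\nabla\sqrt{u_iu_j}\|_{L^2(Q_T)}$), whereas you first split time and space and then absorb the resulting $L^2_x$--versus--$L^{(2d+2)/d}_x$ mismatch by the bounded-domain embedding; both routes land on the same exponent $r=(2d+2)/(2d+1)$.
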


\begin{proof}
We reformulate \eqref{2.ted} as
\begin{equation}\label{2.weak}
\begin{aligned}
  \int_\Omega&\tau^{-1}(u^k-u^{k-1})\cdot\phi dx 
	+ \int_\Omega\na\phi:A(u^k)\na u^k dx
	- \int_\Omega\sum_{i=1}^n u_i^k b_i^k\cdot\na\phi_i dx \\
	&{}+ \eps\int_\Omega\bigg(\sum_{|\alpha|=m}D^\alpha w^k\cdot D^\alpha\phi
	+ w^k\cdot\phi\bigg)dx
	= \int_\Omega\frac{f(u^k)\cdot\phi}{1+\delta|f(u^k)|}dx,
\end{aligned}
\end{equation}
where $\phi\in H^{m+1}(\Omega;\R^n)\hookrightarrow W^{1,\infty}(\Omega;\R^n)$.
Therefore,
\begin{align}
  \bigg|&\int_\Omega\tau^{-1}(u^k-u^{k-1})\cdot\phi dx\bigg|
	\le \sum_{i,j=1}^n\|A_{ij}(u^k)\na u^k_j\|_{L^1(\Omega)}
	\|\na\phi_i\|_{L^\infty(\Omega)} \nonumber \\
	&{}\phantom{xx}+ \|b\|_{L^\infty(Q_T)}\|u^k\|_{L^1(\Omega)}
	\|\na \phi\|_{L^\infty(\Omega)}
	+ \eps\|w^k\|_{H^m(\Omega)}\|\phi\|_{H^m(\Omega)}
	+ \delta^{-1}\|\phi\|_{L^1(\Omega)} \label{2.aux2} \\
	&\le \bigg(\sum_{i,j=1}^n \|A_{ij}(u^k)\na u^k_j\|_{L^1(\Omega)}
	+ C(b)\|u^k\|_{L^1(\Omega)} + \eps\|w^k\|_{H^m(\Omega)} + C(\delta)\bigg)
	\|\phi\|_{H^{m+1}(\Omega)}. \nonumber
\end{align}
Observe that the entropy controls the $L^1$ norm such that, by
Lemma \ref{lem.epi}, 
\begin{equation}\label{2.L1}
  \|u^k\|_{L^1(\Omega)}\le C \quad\mbox{for all }k=1,\ldots,N
\end{equation}
and some $C>0$ which is independent of $k$, $\delta$, $\eps$, and $\tau$.

We estimate now the $L^1$ norm of $A_{ij}(u^k)\na u_j^k$.  
For this, we need to distinguish the cases (i) and (ii). 
In case (i), we take $\theta=d(d+2)\in(0,1)$. 
Then the Gagliardo-Nirenberg inequality and estimate \eqref{2.L1} give
$$
  \|u^k\|_{L^2(\Omega)} \le C\|\na u^k\|_{L^2(\Omega)}^\theta
	\|u^k\|_{L^1(\Omega)}^{1-\theta} + \|u^k\|_{L^1(\Omega)}
	\le C\big(1 + \|\na u^k\|_{L^2(\Omega)}^\theta\big).
$$
Since $A_{ij}(u^k)$ depends linearly on $u_i^k$, this shows that
$$
  \sum_{i,j=1}^n\|A_{ij}(u^k)\na u_j^k\|_{L^1(\Omega)}
	\le \sum_{i,j=1}^n\|A_{ij}(u^k)\|_{L^2(\Omega)}\|\na u_j^k\|_{L^2(\Omega)}
	\le C\big(1 + \|\na u^k\|_{L^2(\Omega)}^{1+\theta}\big).
$$
We deduce from \eqref{2.aux2} that
$$
  \big\|\tau^{-1}(u^k-u^{k-1})\|_{H^{m+1}(\Omega)'}
  \le C\|\na u^k\|_{L^2(\Omega)}^{1+\theta} + \eps\|w^k\|_{H^m(\Omega)} 
	+ C(\delta,b).
$$
For $r=(d+2)/(d+1)$, we have $(1+\theta)r=2$ and $r<2$ and consequently, after summing
\eqref{2.aux2} from $k=1,\ldots,N$,
\begin{align*}
  \bigg(\tau&\sum_{k=1}^N\big\|\tau^{-1}(u^k-u^{k-1})\big\|_{H^{m+1}(\Omega)'}^r
	\bigg)^{1/r} \\
	&\le C\bigg(\tau\sum_{k=1}^N\|\na u^k\|_{L^2(\Omega)}^{(1+\theta)r}\bigg)^{1/r}
	+ \eps\bigg(\tau\sum_{k=1}^N\|w^k\|_{H^m(\Omega)}^r\bigg)^{1/r}
	+ C(\delta,b,T) \\
  &\le C\bigg(\tau\sum_{k=1}^N\|\na u^k\|_{L^2(\Omega)}^{2}\bigg)^{1/r}
	+ C(T)\eps\bigg(\tau\sum_{k=1}^N\|w^k\|_{H^m(\Omega)}^2\bigg)^{1/2}
	+ C(\delta,b,T).
\end{align*}
Lemma \ref{lem.epi} (i) implies that the right-hand side is uniformly bounded,
which shows \eqref{2.time}.

In case (ii), we need the $L^2$ bound for $\na (u_i^k u_j^k)^{1/2}$ and
the special structure of $A_{ij}(u^k)$. Since a similar argument was presented
in \cite[Remark 12]{CDJ17}, we give only a sketch of the proof. First, we observe that  
$$
  \sum_{j=1}^n A_{ij}(u^k)\na u_j^k
	= \na\bigg(a_{i0}u_i^k + \sum_{j=1}^n a_{ij}u_i^ku_j^k\bigg).
$$
Lemma \ref{lem.epi} (ii) shows that 
$\tau\sum_{k=1}^N\|\na(u_i^ku_j^k)^{1/2}\|_{L^2(\Omega)}^2$ and
$\sup_{k=1,\ldots,N}\|(u_i^ku_j^k)^{1/2}\|_{L^1(\Omega)}$ are uniformly bounded.
Hence, by the Gagliardo-Nirenberg inequality,
$\tau\sum_{k=1}^N\|(u_i^ku_j^k)^{1/2}\|_{L^{p}(\Omega)}^p$ is uniformly bounded,
where $p=2+2/d$. Then the
H\"older inequality with $r=(2d+2)/(2d+1)$ and $r'=2d+2$ gives the bound
\begin{align*}
  \bigg(\tau&\sum_{k=1}^N\big\|\na(u_i^k u_j^k)\big\|_{L^r(\Omega)}^r\bigg)^{1/r} 
	= 2\bigg(\tau\sum_{k=1}^n\big\|(u_i^ku_j^k)^{1/2}\na(u_i^ku_j^k)^{1/2}
	\big\|_{L^r(\Omega)}^r\bigg)^{1/r} \\
	&\le 2\bigg(\tau\sum_{k=1}^N\big\|(u_i^ku_j^k)^{1/2}\big\|_{L^{p}(\Omega)}^p
	\bigg)^{1/p}\bigg(\tau\sum_{k=1}^N\big\|\na(u_i^ku_j^k)^{1/2}\big\|_{L^2(\Omega)}^2
	\bigg)^{1/2} \le C.
\end{align*}
Consequently,
$$
  \bigg(\tau\sum_{k=1}^N\sum_{i,j=1}^n\|A_{ij}(u^k)\na u_j^k\|_{L^1(\Omega)}^r
	\bigg)^{1/r} \le C(u^0,b,T).
$$
This proves \eqref{2.time} with $r=(2d+2)/(2d+1)$.
\end{proof}

We define the piecewise constant functions in time
$u^{(\tau)}(x,t)=u(w^k(x))$, $w^{(\tau)}(x,t)=w^k(x)$, and $b^{(\tau)}(x,t)=b^k(x)$
for $x\in\Omega$ and $t\in((k-1)\tau,\tau k]$, $k=1,\ldots,N$. Furthermore, we
need the discrete time derivative $\pa_t^{(\tau)} u^{(\tau)}(x,t)
=\tau^{-1}(u(w^k(x))-u(w^{k-1}(x)))$ for $x\in\Omega$, $t\in((k-1)\tau,\tau k]$.
Recall that for $k=1$, we have $u(w^0)=u_\eps^0$.
We conclude from Lemmas \ref{lem.epi} and \ref{lem.time} the following bounds.

\begin{corollary}\label{coro}
{\rm (i)} Let (H5') or (H5'') with $a_{ii}>0$ for $i=1,\ldots,n$ holds. Then
for all $i=1,\ldots,n$:
\begin{align}
  & \|u_i^{(\tau)}\|_{L^\infty(0,T;L^1(\Omega))}
	+ \|u_i^{(\tau)}\|_{L^2(0,T;H^1(\Omega))}
	+ \eps^{1/2}\|w_i^{(\tau)}\|_{L^2(0,T;H^m(\Omega))}
	\le C(u^0,b,T), \label{2i.est} \\
	&\|\pa_t^{(\tau)}u_i^{(\tau)}\|_{L^r(0,T;H^{m+1}(\Omega)')}
	\le C(\delta,u^0,b,T), \quad r=\frac{d+2}{d+1}. \label{2i.time}
\end{align}
{\rm (ii)} Let (H5'') with $a_{i0}>0$ for $i=1,\ldots,n$ holds. Then for all
$i\neq j$,
\begin{align}
  &\|u_i^{(\tau)}\|_{L^\infty(0,T;L^1(\Omega))}
	+ \|(u_i^{(\tau)})^{1/2}\|_{L^2(0,T;H^1(\Omega))} \\
	&\phantom{xx}{}+ \|\na(u_i^{(\tau)}u_j^{(\tau)})^{1/2}\|_{L^2(Q_T)}^2
	+ \eps^{1/2}\|w_i^{(\tau)}\|_{L^2(0,T;H^m(\Omega))}
	\le C(u^0,b,T), \label{2ii.est} \\
	&\|\pa_t^{(\tau)}u_i^{(\tau)}\|_{L^r(0,T;H^{m+1}(\Omega)')}
	\le C(\delta,u^0,b,T), \quad r=\frac{2d+2}{2d+1}. \label{2ii.time}
\end{align}
\end{corollary}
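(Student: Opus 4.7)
Since $u^{(\tau)}$, $w^{(\tau)}$, and $\pa_t^{(\tau)} u^{(\tau)}$ are piecewise constant in time, every norm appearing in the corollary translates into a Riemann-type sum over $k=1,\ldots,N$. The plan is to rewrite each such norm in this discrete form and then read off the required bound directly from Lemma \ref{lem.epi} or Lemma \ref{lem.time}. Most of the work is already done; the only new ingredient needed is a short Gagliardo--Nirenberg interpolation.

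First, for the $L^\infty(0,T;L^1(\Omega))$ estimate I would use that $h_i(s)=s(\log s-1+\lambda_i)+e^{-\lambda_i}$ is nonnegative and grows superlinearly at infinity, so there exist constants $c_1,c_2>0$ with $s\le c_1 h_i(s)+c_2$ for every $s\ge 0$. Combined with $\sup_k\int_\Omega h(u^k)\,dx\le C$ from Lemma \ref{lem.epi}, this yields a uniform $L^1$-bound on every $u_i^k$ and hence on $u_i^{(\tau)}$. The remaining $\tau$-weighted sums in Lemma \ref{lem.epi} produce directly $\|\na u_i^{(\tau)}\|_{L^2(Q_T)}$ in case (i), both $\|\na (u_i^{(\tau)})^{1/2}\|_{L^2(Q_T)}$ and $\|\na(u_i^{(\tau)} u_j^{(\tau)})^{1/2}\|_{L^2(Q_T)}$ in case (ii), and the $\eps^{1/2}$-weighted bound on $w^{(\tau)}$ in $L^2(0,T;H^m(\Omega))$ in both cases.

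The step that requires a genuine calculation is the full $H^1$-bound for $u_i^{(\tau)}$ in case (i), since the gradient estimate alone does not control $\|u_i^{(\tau)}\|_{L^2(Q_T)}$. Here I would apply Gagliardo--Nirenberg in the form
\[
  \|u_i^k\|_{L^2(\Omega)}^2 \le C\|\na u_i^k\|_{L^2(\Omega)}^{2\theta}\|u_i^k\|_{L^1(\Omega)}^{2(1-\theta)} + C\|u_i^k\|_{L^1(\Omega)}^2, \qquad \theta=\tfrac{d}{d+2},
\]
then multiply by $\tau$ and sum over $k$; since $2\theta\le 2$, H\"older's inequality in the discrete sum combines the uniform $L^\infty(L^1)$-bound with $\tau\sum_k\|\na u_i^k\|_{L^2}^2$ to yield the desired bound on $\|u_i^{(\tau)}\|_{L^2(Q_T)}$. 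In case (ii) this step is even easier, because $\|(u_i^{(\tau)})^{1/2}\|_{L^2(Q_T)}^2=\|u_i^{(\tau)}\|_{L^1(Q_T)}$ is already under control.

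Finally, the time-derivative estimate is just a reformulation of Lemma \ref{lem.time}, because
\[
  \|\pa_t^{(\tau)} u_i^{(\tau)}\|_{L^r(0,T;H^{m+1}(\Omega)')}^r = \tau\sum_{k=1}^N\bigl\|\tau^{-1}(u^k-u^{k-1})\bigr\|_{H^{m+1}(\Omega)'}^r,
\]
so one reads off $r=(d+2)/(d+1)$ in case (i) and $r=(2d+2)/(2d+1)$ in case (ii). The only real work lies in the Gagliardo--Nirenberg interpolation of the previous paragraph; the rest is a translation between discrete and piecewise constant notation.
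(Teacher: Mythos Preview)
Your proposal is correct and follows the same route as the paper, which simply states the corollary as an immediate consequence of Lemmas~\ref{lem.epi} and~\ref{lem.time} without further argument. Your Gagliardo--Nirenberg interpolation to upgrade the gradient bound to a full $L^2(0,T;H^1(\Omega))$ bound is exactly the missing detail the paper leaves implicit here (and carries out later, in the proof of Lemma~\ref{lem.delta}, for the $L^{2+2/d}$ estimate).
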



\section{The limit $(\eps,\tau)\to 0$}\label{sec.epstau}

The uniform bounds of Corollary \ref{coro} are sufficient to pass to the
simultaneous limit $(\eps,\tau)\to 0$. First, consider case (i), i.e.\
(H5') or (H5'') with $a_{ii}>0$. The gradient bound in
\eqref{2i.est} and estimate \eqref{2i.time} of the discrete time derivative
allow us to apply the Aubin-Lions lemma in the version of \cite{DrJu12},
yielding the existence of a subsequence, which is not relabeled, such that,
as $(\eps,\tau)\to 0$,
\begin{align*}
  u^{(\tau)} \to u &\quad\mbox{strongly in }L^2(Q_T), \\
  u^{(\tau)} \rightharpoonup u &\quad\mbox{weakly in }L^2(0,T;H^1(\Omega)),
\end{align*}
where $u=(u_1,\ldots,u_n)$ and $u_i\ge 0$.
In Case (ii), i.e.\ (H5'') with $a_{i0}>0$,
we have only a gradient estimate for the square root of $u_i^{(\tau)}$,
by \eqref{2ii.est}. Therefore,
we need to apply the nonlinear Aubin-Lions lemma of \cite[Theorem 3]{CJL14}
to find that, again for a subsequence,
\begin{align*}
  u^{(\tau)} \to u &\quad\mbox{strongly in }L^1(Q_T), \\
	(u^{(\tau)})^{1/2}\rightharpoonup u^{1/2} 
	&\quad\mbox{weakly in }L^2(0,T;H^1(\Omega)).
\end{align*}
In both cases, possibly for a subsequence,
$$
  \eps w^{(\tau)}\to 0 \quad\mbox{strongly in }L^2(0,T;H^m(\Omega)).
$$

{\em In the following, we focus on the case (H5') or (H5'') with $a_{ii}>0$ since
the other case, (H5'') with $a_{i0}>0$, can be presented in a similar way.} 
In fact, the existence proof works as long as $A_{ij}(u)\na u_j$
is bounded in $L^s(Q_T)$ for some $s>1$, and this holds in both cases, as shown
in the proof of Lemma \ref{lem.time}.

\begin{lemma}\label{lem.delta}
Let (H5') or (H5'') with $a_{ii}>0$ for $i=1,\ldots,n$ hold.
Then
$$
  \int_\Omega h(u)dx + \|u\|_{L^2(0,T;H^1(\Omega))}
	+ \|u\|_{L^{2+2/d}(Q_T)} + \|A(u)\na u\|_{L^{s}(Q_T)}
	\le C(u^0,b,T),
$$
where $s=(2d+2)/(2d+1)$, $\pa_t u_i\in L^s(0,T;W^{1,2d+2}(\Omega)')$,
for all $\phi\in C_0^\infty(\overline\Omega\times[0,T);\R^n)$,
\begin{equation}\label{2.d}
\begin{aligned}
  -\int_0^T\int_\Omega u\cdot\pa_t\phi dxdt &- \int_\Omega u^0\cdot\phi(\cdot,0)dx 
	+ \int_0^T\int_\Omega\na\phi:(A(u)\na u-ub)dxdt \\
	&=\int_0^T\int_\Omega\frac{f(u)\cdot\phi}{1+\delta|f(u)|}dxdt,
\end{aligned}
\end{equation}
and $u_i(\cdot,0)=u_i^0$ is satisfied in the sense of $W^{1,2d+2}(\Omega)'$.
\end{lemma}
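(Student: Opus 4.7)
The plan is to fix $\delta>0$ and pass to the simultaneous limit $(\eps,\tau)\to 0$ in the time-discrete weak formulation \eqref{2.weak}, using the compactness for $u^{(\tau)}$ already extracted immediately before the statement of the lemma. I would first refine the subsequence so that $u^{(\tau)}\to u$ pointwise a.e.\ in $Q_T$; the nonnegativity $u_i\ge 0$ then follows immediately from the corresponding property of $u^{(\tau)}_i=\exp(w^{(\tau)}_i-\lambda_i)$.

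For the uniform bounds, the entropy estimate $\int_\Omega h(u)dx\le C$ follows from Lemma~\ref{lem.epi}(i) together with Fatou's lemma applied along the pointwise-convergent subsequence. The $L^2(0,T;H^1(\Omega))$ bound is immediate from weak lower semicontinuity. To obtain $u\in L^{2+2/d}(Q_T)$, I would apply the Gagliardo-Nirenberg interpolation to $u^{(\tau)}$, using its uniform $L^\infty(0,T;L^1(\Omega))\cap L^2(0,T;H^1(\Omega))$ bound from Corollary~\ref{coro}(i) to produce a $\tau$-uniform bound in $L^p(Q_T)$ with $p=2+2/d$, and transfer it to $u$ by Fatou. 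Since $A_{ij}(u)$ is affine in $u$, so that $|A_{ij}(u)|\le C(1+|u|)$, H\"older's inequality with exponents $p$ and $2$ yields the stated $L^s(Q_T)$ control on $A(u)\na u$ with $1/s=1/p+1/2=(2d+1)/(2d+2)$.

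The main obstacle is identifying the weak limit of the nonlinear diffusion $A(u^{(\tau)})\na u^{(\tau)}$. Because $A_{ij}$ is affine in $u$, it suffices to treat products of the form $u_k^{(\tau)}\na u_j^{(\tau)}$. By interpolation between strong $L^2$ convergence and the uniform $L^p$ bound, $u_k^{(\tau)}\to u_k$ strongly in $L^q(Q_T)$ for every $q<p$; coupled with the weak $L^2(Q_T)$ convergence of $\na u_j^{(\tau)}$, the product converges in $\mathcal{D}'(Q_T)$ to $u_k\na u_j$, and the $\tau$-uniform $L^s$ bound on the product then upgrades this to weak convergence in $L^s(Q_T)$ with limit $A(u)\na u$. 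The remaining terms in \eqref{2.weak} are easier: the elliptic regularization vanishes since \eqref{2i.est} gives $\eps\|w^{(\tau)}\|_{L^2(0,T;H^m(\Omega))}\le C\sqrt{\eps}\to 0$; the drift $u^{(\tau)}b^{(\tau)}$ converges strongly in $L^1(Q_T)$ via strong convergence of both factors (using that $b^{(\tau)}$ is a piecewise constant time average of $b\in L^\infty(Q_T)$); and the regularized reaction $f(u^{(\tau)})/(1+\delta|f(u^{(\tau)})|)$ converges by dominated convergence, as it is pointwise bounded by $\delta^{-1}$ and converges a.e.\ by continuity of $f$.

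Once \eqref{2.d} is established, the time-derivative bound follows by duality: for any $\phi\in L^{s'}(0,T;W^{1,s'}(\Omega))$ with $s'=2d+2$, each spatial integral on the right-hand side of \eqref{2.d} is controlled by $\|\phi\|_{L^{s'}(0,T;W^{1,s'}(\Omega))}$ through H\"older's inequality and the bounds already obtained, giving $\pa_t u_i\in L^s(0,T;W^{1,2d+2}(\Omega)')$. The initial condition $u_i(\cdot,0)=u_i^0$ in the sense of $W^{1,2d+2}(\Omega)'$ is then recovered in the standard way: one tests \eqref{2.d} against $\phi\in C^\infty(\overline\Omega\times[0,T])$ with $\phi(\cdot,T)=0$ and $\phi(\cdot,0)$ arbitrary, integrates by parts in time against the just-obtained $\pa_t u_i$, and identifies the trace at $t=0$ by comparing with the boundary contribution in \eqref{2.d}.
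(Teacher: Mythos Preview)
Your proposal is correct and follows essentially the same route as the paper: Fatou for the entropy bound, weak lower semicontinuity for the $H^1$ norm, Gagliardo--Nirenberg interpolation for the $L^{2+2/d}$ bound, H\"older for the $L^s$ control on $A(u)\na u$, and identification of the weak limit of $A(u^{(\tau)})\na u^{(\tau)}$ via the product of strong and weak $L^2$ convergences (the paper does this directly for $A_{ij}(u^{(\tau)})$ using its linearity in $u$, whereas you interpolate first to $L^q$ for $q<p$; both work). The only point you leave implicit is the passage from the discrete time derivative $\pa_t^{(\tau)}u^{(\tau)}$ to $-\int u\cdot\pa_t\phi - \int u^0\phi(\cdot,0)$, which the paper handles by a discrete summation-by-parts argument (with the observation that the approximate initial datum $u^0_\eps\to u^0$ as $\eps\to 0$); this is standard and does not affect the validity of your plan.
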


The notation $\psi\in C_0^\infty(\overline\Omega\times[0,T))$ means that
$\psi\in C^\infty(\overline\Omega\times[0,T])$ and $\psi(\cdot,T)=0$.
Furthermore, recall that $\na\phi:(ub)=\sum_{i=1}^n u_ib_i\cdot\na\phi_i$.

\begin{proof}
The strong convergence of $(u^{(\tau)})$ in $L^2(Q_T)$ implies that, for
a subsequence, $u^{(\tau)}\to u$ a.e.\ in $Q_T$ and, because of the continuity
of $h$, $h(u^{(\tau)})\to h(u)$ a.e.\ in $Q_T$. Then Lemma \ref{lem.epi} and
Fatou's lemma show that $\int_\Omega h(u)dx\le C(u^0,b,T)$.
The uniform estimate for $(u^{(\tau)})$ in $L^2(0,T;H^1(\Omega))$ and
the weakly lower semi-continuity of the norm imply that
$\|u\|_{L^2(0,T;H^1(\Omega))}\le C(u^0,b,T)$. Next, we apply the
Gagliardo-Nirenberg inequality with $p=2+2/d$ and $\theta=d/(d+1)$
(such that $\theta p=2$):
\begin{align*}
  \|u_i^{(\tau)}\|_{L^p(Q_T)}^p
	&= \int_0^T\|u_i^{(\tau)}\|_{L^p(\Omega)}^p dx
	\le C\int_0^T\|u_i^{(\tau)}\|_{H^1(\Omega)}^{\theta p}
	\|u_i^{(\tau)}\|_{L^1(\Omega)}^{(1-\theta)p}dt \\
	&\le C\|u_i^{(\tau)}\|_{L^\infty(0,T;L^1(\Omega)}^{(1-\theta)p}
	\int_0^T\|u_i^{(\tau)}\|_{H^1(\Omega)}^2 dt \le C(u^0,b,T).
\end{align*}
Then, by H\"older's inequality, since $1/s=1/p+1/2$,
$$
  \|u_i^{(\tau)}\na u_j^{(\tau)}\|_{L^s(Q_T)} \le C\|u_i^{(\tau)}\|_{L^p(Q_T)}
	\|\na u_i^{(\tau)}\|_{L^2(Q_T)} \le C(u^0,b,T).
$$
Consequently, since $A_{ij}(u^{(\tau)})$ depends on $u_i^{(\tau)}$ linearly,
$\|A_{ij}(u^{(\tau)})\na u_j^{(\tau)}\|_{L^s(Q_T)}\le C(u^0,b,T)$. It holds that
$A_{ij}(u^{(\tau)})\to A_{ij}(u)$ strongly in $L^2(Q_T)$ and
$\na u_j^{(\tau)}\rightharpoonup \na u_j$ weakly in $L^2(Q_T)$. Then the product
converges weakly,
$A_{ij}(u^{(\tau)})\na u^{(\tau)}\rightharpoonup A_{ij}(u)\na u$ weakly in
$L^1(Q_T)$, and this convergence holds even in $L^s(Q_T)$. 
Hence, the limit $(\eps,\tau)\to 0$ leads to the estimate
$\|A_{ij}(u)\na u_j\|_{L^s(Q_T)}\le C(u^0,b,T)$.

Summing the weak formulation \eqref{2.weak} from $k=1,\ldots,N$, we may reformulate
it as
\begin{equation}\label{2.aux3}
\begin{aligned}
  \int_0^T&\int_\Omega \pa_t^{(\tau)}u^{(\tau)}\cdot\phi dxdt
	+ \int_0^T\int_\Omega \na\phi:\big(A(u^{(\tau)})\na u^{(\tau)}
	-u^{(\tau)}b^{(\tau)}\big)dxdt \\
	&{}+ \eps\int_0^T\int_\Omega\bigg(\sum_{|\alpha|=m}D^\alpha w^{(\tau)}
	\cdot D^\alpha\phi + w^{(\tau)}\cdot\phi\bigg)dxdt
	= \int_0^T\int_\Omega\frac{f(u^{(\tau)})\cdot\phi}{1+\delta|f(u^{(\tau)})|}dxdt.
\end{aligned}
\end{equation}
Using the arguments of \cite[pp.~2792-2793]{ChLi13}, we can show that
\begin{align*}
  \int_0^T\int_\Omega\pa_t^{(\tau)}u^{(\tau)}\cdot\phi dxdt
	&\to -\int_0^T\int_\Omega u\cdot\pa_t\phi dxdt - \int_\Omega u^0\cdot\phi(\cdot,0)dx, \\
  \pa_t^{(\tau)}u^{(\tau)}
	&\rightharpoonup \pa_t u \quad\mbox{weakly in }L^r(0,T;H^{m+1}(\Omega)').
\end{align*}
Since $A(u^{(\tau)})\na u^{(\tau)}\rightharpoonup A(u)\na u$
weakly in $L^s(Q_T)$ (see the above argumentation) and $b^{(\tau)}\to b$ in $L^2(Q_T)$,
we can pass to the limit $(\eps,\tau)\to 0$ in the second integral of \eqref{2.aux3}.
The third integral vanishes in the limit, and the integral on the right-hand
side of \eqref{2.aux3} converges to
$$
  \int_0^T\int_\Omega \frac{f(u)\cdot\phi}{1+\delta|f(u)|}dxdt,
$$
since $f(u^{(\tau)})/(1+\delta|f(u^{(\tau)})|)$ is bounded independently of
$(\eps,\tau)$ (but depending on $\delta$). Thus, in the limit $(\eps,\tau)\to 0$,
we infer formulation \eqref{2.d}. Since $A_{ij}(u)\na u_j\in L^s(Q_T)$ and
$f_i(u)/(1+\delta|f(u)|)\in L^\infty(Q_T)$ for any fixed $\delta>0$, a density
argument shows that the weak formulation \eqref{2.d} holds for all
$\phi\in L^{2d+2}(0,T;W^{1,2d+2}(\Omega;\R^n))$. This implies that
$\pa_t u_i\in L^s(0,T;W^{1,2d+2}(\Omega)')$.
\end{proof}


\section{The limit $\delta\to 0$, $L\to\infty$}\label{sec.delta}

We use two results from \cite{Fis15}, a truncation and an approximate chain rule. 
For the truncation, let $\varphi\in C^\infty(\R)$ be a
nonincreasing function satisfying $\varphi(x)=1$ for $x<0$ and $\varphi(x)=0$
for $x\ge 1$. We define for $i=1,\ldots,n$ and $L\in\N$ the truncation function
\begin{equation}\label{5.varphi}
  \varphi_i^L(v) = v_i\varphi\bigg(\frac{1}{L}\sum_{k=1}^n v_k - 1\bigg)
	+ 2L\bigg(1 - \varphi\bigg(\frac{1}{L}\sum_{k=1}^n v_k - 1\bigg)\bigg)
	\quad\mbox{for } v\in[0,\infty)^n.
\end{equation}
Clearly, $\varphi_i^L\in C^\infty([0,\infty)^n)$. Moreover, the following
properties hold.

\begin{lemma}
It holds that
\renewcommand{\labelenumi}{\rm (L\theenumi)}
\begin{enumerate}[leftmargin=10mm]
\item 
For all $L\in\N$, $v\in[0,\infty)^n$, $i=1,\ldots,n$,
$$
  0\le \varphi_i^L(v)\le v_i + 2\sum_{k=1}^n v_k.
$$

\item 
For all $v\in[0,\infty)^n$ with $\sum_{k=1}^n v_k<L$, we have $\varphi_i^L(v)=v_i$.

\item 
For any fixed $L\in\N$, $\operatorname{supp}(\varphi_i^L)'$ is a compact subset 
of $[0,\infty)^n$.

\item 
For all $v\in[0,\infty)^n$, $j=1,\ldots,n$, we have 
$\lim_{L\to\infty}\pa_j\varphi_i^L(v)=\delta_{ij}$.

\item 
There exists $K_1>0$ such that for all $L\in\N$, $v\in[0,\infty)^n$, $j=1,\ldots,n$,
$$
  |\pa_j\varphi_i^L(v)|\le K_1.
$$

\item 
For all $j,k=1\ldots,n$,
$$
  \lim_{L\to\infty}\sup_{v\in[0,\infty)^n}|\pa_j\pa_k\varphi_i^L(v)|=0.
$$

\item 
There exists $K_2>0$ such that for all $L\in\N$, $v\in[0,\infty)^n$, 
$j,k,\ell=1,\ldots,n$,
$$
  (1+v_\ell)|\pa_j\pa_k\varphi_i^L(v)| + v_j^{1/2}v_k^{1/2}|\pa_j\pa_k\varphi_i^L(v)|
	\le K_2.
$$

\item 
Let $L>L_0>0$ and $v\in[0,\infty)^n$. If $\sum_{i=1}^n v_i\ge L_0$ then
$\sum_{i=1}^n\varphi_i^L(v)\ge L_0$.
\end{enumerate}
\end{lemma}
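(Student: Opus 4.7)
The plan is a direct computation that exploits the explicit structure of $\varphi_i^L$. I would begin by setting $S(v)=\sum_{k=1}^n v_k$ and $\psi_L(v)=\varphi(S(v)/L-1)$, so that
$$\varphi_i^L(v)=v_i\psi_L(v)+2L\bigl(1-\psi_L(v)\bigr)$$
is a smooth convex combination of $v_i$ and $2L$. From the hypotheses on $\varphi$, $\psi_L\in[0,1]$, with $\psi_L=1$ on $\{S<L\}$ and $\psi_L=0$ on $\{S>2L\}$, while $\varphi'$ and $\varphi''$ are supported in $[0,1]$. A short calculation yields
\begin{align*}
  \partial_j\varphi_i^L &=\delta_{ij}\psi_L+\frac{v_i-2L}{L}\,\varphi'\bigl(S/L-1\bigr),\\
  \partial_j\partial_k\varphi_i^L &=\frac{\delta_{ij}+\delta_{ik}}{L}\,\varphi'\bigl(S/L-1\bigr)+\frac{v_i-2L}{L^2}\,\varphi''\bigl(S/L-1\bigr),
\end{align*}
and the nontrivial contributions from $\varphi',\varphi''$ arise only in the transition band $\{L\le S\le 2L\}$, inside which $v_j\le 2L$ for every $j$ and $|v_i-2L|/L\le 2$.

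With these formulas in hand the properties split by analysing where $S(v)$ lies. For (L1), (L2), and (L8) I would treat the three regions $\{S<L\}$, $\{L\le S\le 2L\}$, $\{S>2L\}$: in the first $\varphi_i^L=v_i$, in the third $\varphi_i^L=2L$, and in the second $\varphi_i^L$ is a convex combination of the two, which delivers the required bounds from both sides. In particular $\sum_i\varphi_i^L$ is a convex combination of $S$ and $2nL$, and the hypothesis $S\ge L_0$ together with $L>L_0$ forces both quantities to exceed $L_0$, giving (L8). For (L3), the gradient equals $e_i$ on $\{S<L\}$ and vanishes identically on $\{S>2L\}$, so $\operatorname{supp}(\varphi_i^L)'\subseteq\{v\in[0,\infty)^n:S(v)\le 2L\}$, a bounded closed subset of $[0,\infty)^n$, hence compact. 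Property (L4) follows because, for fixed $v$, once $L>S(v)$ the formula reduces to $\partial_j\varphi_i^L(v)=\delta_{ij}$.

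For the $L^\infty$ bounds (L5)--(L7), the scaling inside the transition band does the work. Property (L5) follows from $|\partial_j\varphi_i^L|\le 1+2\|\varphi'\|_\infty$. The Hessian formula yields
$$|\partial_j\partial_k\varphi_i^L|\le\frac{2}{L}\bigl(\|\varphi'\|_\infty+\|\varphi''\|_\infty\bigr),$$
which both tends to $0$ as $L\to\infty$ (giving (L6)) and remains $O(1)$ when multiplied by $(1+v_\ell)$ or by $v_j^{1/2}v_k^{1/2}$ (giving (L7)), since in the transition band $v_\ell\le 2L$ and $v_jv_k\le 4L^2$.

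There is no genuine obstacle; the entire argument reduces to writing down the derivatives and performing case analysis on $S(v)$. The only point worth flagging is that the $1/L$ and $1/L^2$ damping in the derivative formulas is precisely calibrated against the $O(L)$ growth of $v_j$ within the transition band, so that the weighted products in (L6) and (L7) remain uniformly bounded. This calibration is built into the definition of $\varphi_i^L$ via the choice of cutoff scale, and is really the only content of the lemma beyond direct bookkeeping.
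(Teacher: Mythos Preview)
Your proposal is correct and follows essentially the same approach as the paper: a direct case analysis on the three regions $\{S<L\}$, $\{L\le S\le 2L\}$, $\{S>2L\}$ combined with the explicit derivative formulas. The paper is actually terser than you are---it defers (L4), (L5), (L6), and half of (L7) to Fischer's paper and merely declares the remaining items ``clear'' or ``readily verified''---so your explicit computations are a welcome fleshing-out rather than a different argument. Your convex-combination proof of (L8), observing that $\sum_i\varphi_i^L=\psi_L S+(1-\psi_L)2nL$ with both endpoints $\ge L_0$, is slightly slicker than the paper's case split on whether $S\le 2nL$, but the content is the same.
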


Property (L4) is not used in the proof but it clarifies the role of
$\varphi_i^L$, when we compute the limit $L\to\infty$ in
$\sum_{j=1}^n\pa_j\varphi_i^L(u)$, which gives $u_i$.

\begin{proof}
Ad (L1): Let $v_s:=\frac{1}{L}\sum_{k=1}^n v_k-1$. 
If $v_s<0$ then $\varphi(v_s)=1$ and $\varphi_i^L(v)=v_i$.
If $v_s>1$ (which is equivalent to $2L<\sum_{k=1}^n v_k$) then 
$\varphi(v_s)=0$ and $\varphi_i^L(v) = 2L < \sum_{k=1}^n v_k$. 
Finally, if $0\le v_s\le 1$ (or $L\le \sum_{k=1}^n v_k\le 2L$), we have
$\varphi_i^L(v)\le v_i + 2L \le v_i + 2\sum_{k=1}^n v_k$.

Ad (L2): $\sum_{k=1}^n v_k<L$ implies that $v_s<0$ and $\varphi_i^L(v)=v_i$.

Ad (L3): This is clear since $\varphi$ is constant on $(-\infty,0)$ and $(1,\infty)$.

Ad (L4) and (L5): See (E4) and (E5), respectively, in \cite{Fis15}.

Ad (L6): This is (E7) in \cite{Fis15} except that the supremum is computed
on $(0,\infty)^n$ (which is possible since $\pa_j\pa_k\varphi_i^L$ has compact
support).

Ad (L7): The second inequality can be found in (E2) of \cite{Fis15}.
The first one is new and can be readily verified.

Ad (L8): By definition of $\varphi_i^L$, we have
$$
  \sum_{i=1}^n\varphi_i^L(v) - \sum_{i=1}^n v_i
	= \bigg(2nL - \sum_{i=1}^n v_i\bigg)
	\bigg(1 - \varphi\bigg(\frac{1}{L}\sum_{k=1}^nv_k - 1\bigg)\bigg).
$$
If $\sum_{i=1}^n v_i\le 2nL$ then $\sum_{i=1}^n\varphi_i^L(v)\ge\sum_{i=1}^n v_i\ge L_0$;
otherwise $\frac{1}{L}\sum_{k=1}^n v_k-1>2n-1\ge 1$. Hence,
$\varphi_i^L(v)=2L>L_0$.
\end{proof}

The second result concerns an approximate chain rule. Let 
${\mathcal M}(\overline{\Omega}\times[0,T))$ denote the set of Radon measures on
the Borel sets of $\overline{\Omega}\times[0,T)$ and let $S$ denote the
$(d-1)$-dimensional Hausdorff measure.

\begin{lemma}[Lemma 4 of \cite{Fis15}]\label{lem.chain}
Let $\Omega\subset\R^d$ ($d\ge 1$) be a bounded domain with Lipschitz boundary,
$T>0$, $v_{0,i}\in L^1(\Omega)$, $w_i\in L^1(0,T;L^1(\Omega))$, 
$z_i\in L^2(0,T;L^2(\Omega;\R^n))$,
$q_i\in L^1(0,T;L^1(\pa\Omega))$, 
and $\mu_i\in{\mathcal M}(\overline{\Omega}\times[0,T))$, $i=1,\ldots,n$.
Assume that $v\in L^2(0,T;H^1(\Omega;$ $\R^n))$ solves for all 
$\psi\in C_0^\infty(\overline{\Omega}\times[0,T))$,
\begin{align*}
  -\int_0^T\int_\Omega v_i\pa_t\psi dxdt - \int_\Omega v_{0,i}\psi(\cdot,0)dx 
	&= \int_{\overline{\Omega}\times[0,T)}\psi d\mu_i + \int_0^T\int_\Omega w_i\psi dxdt \\
	&\phantom{xx}{}+ \int_0^T\int_{\pa\Omega}q_i\psi dSdt
	+ \int_0^T\int_\Omega z_i\cdot\na \psi dxdt.
\end{align*}
Finally, let $\xi\in C^\infty(\R^n)$ be given with compactly supported first
derivatives. Then there exists $C(\Omega)>0$ such that 
for all $\psi\in C_0^\infty(\overline{\Omega}\times[0,T))$,
\begin{align*}
  \bigg|&-\int_0^T\int_\Omega\xi(v)\pa_t\psi dxdt - \int_\Omega\xi(v_0)\psi(\cdot,0)dx \\
	&\phantom{xx}{}- \int_0^T\int_\Omega\sum_{i=1}^n\pa_i\xi(v)w_i\psi dxdt
	- \int_0^T\int_{\pa\Omega}\sum_{i=1}^n\pa_i\xi(v)q_i\psi dSdt \\
	&\phantom{xx}{}- \int_0^T\int_\Omega\sum_{i=1}^n\pa_i\xi(v)z_i\cdot\na\psi dxdt
	- \int_0^T\int_\Omega\sum_{j,k=1}^n \pa_j\pa_k\xi(v)z_j\cdot\na v_k\psi dxdt\bigg| \\
	&\le C(\Omega)\sup_u|\xi'(u)|\|\psi\|_{L^\infty(Q_T)}
	\sum_{i=1}^n\|\mu_i\|_{{\mathcal M}(\overline\Omega\times[0,T))}.
\end{align*}
\end{lemma}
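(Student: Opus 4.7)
The plan is to reduce to the smooth case by mollification in time and then absorb the measure contribution into the stated error bound. Concretely, extend $v$, $v_0$, $w$, $z$, $q$, and $\mu$ by zero outside $[0,T)$, pick a smooth one-sided mollifier $\rho_\eta(t)$ supported in $(-\eta,0)$, and set $v^\eta := v *_t \rho_\eta$, $w^\eta := w *_t \rho_\eta$, $z^\eta := z *_t \rho_\eta$, and $\mu^\eta := \mu *_t \rho_\eta$ (the latter a smooth-in-time function with values in the space of Borel measures on $\overline\Omega$, or, after testing against spatial $C^\infty$ functions, an ordinary $L^1$ function). Testing the hypothesis with $\phi(x)\rho_\eta(t-s)$ for $\phi\in C^\infty(\overline\Omega)$ and differentiating in $s$ shows that $v^\eta$ satisfies, pointwise in time, the smooth identity
\begin{equation*}
  \partial_t v_i^\eta = \mu_i^\eta + w_i^\eta - \diver z_i^\eta \quad\text{in }\Omega,\qquad z_i^\eta\cdot\nu = q_i^\eta\quad\text{on }\partial\Omega,
\end{equation*}
interpreted distributionally; here $\mu_i^\eta$ is a measure on $\overline\Omega$ for each fixed $t$ but depends smoothly on $t$.

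Because $v^\eta(\cdot,t)$ is smooth in $t$ with values in $H^1(\Omega;\R^n)$, the classical chain rule yields $\partial_t\xi(v^\eta) = \sum_i \partial_i\xi(v^\eta)\partial_t v_i^\eta$ in $\mathcal{D}'(Q_T)$. Testing with $\psi\in C_0^\infty(\overline\Omega\times[0,T))$ and integrating by parts in $t$ (to recover $\xi(v_0)\psi(\cdot,0)$ as $\eta\to0$) and in $x$ (to move $\diver z_i^\eta$ off the flux term, producing the bilinear Hessian term $\sum_{j,k}\partial_j\partial_k\xi(v^\eta)z_j^\eta\cdot\nabla v_k^\eta\,\psi$ and the boundary integral $\int_{\partial\Omega}\partial_i\xi(v^\eta)q_i^\eta\psi\,dS$) produces an identity of the form
\begin{equation*}
  -\int_0^T\!\!\int_\Omega \xi(v^\eta)\partial_t\psi - \int_\Omega \xi(v_0^\eta)\psi(\cdot,0) = R^\eta[\psi] + \sum_i \int_0^T\!\!\int_{\overline\Omega}\partial_i\xi(v^\eta)\psi\,d\mu_i^\eta(x)\,dt,
\end{equation*}
where $R^\eta[\psi]$ collects the four chain-rule terms appearing on the left-hand side of the target inequality (with $v,w,q,z$ replaced by their $\eta$-mollifications).

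Next I pass to the limit $\eta\to 0$ in $R^\eta[\psi]$: $v^\eta\to v$ and $z^\eta\to z$ strongly in $L^2(Q_T)$ by the standard mollification argument; $\nabla v^\eta\to \nabla v$ strongly in $L^2(Q_T;\R^{n\times d})$ for the same reason; $\partial_i\xi(v^\eta)\to\partial_i\xi(v)$ a.e. and boundedly since $\xi'$ is continuous and bounded; and similarly for the second derivatives (which are bounded and continuous because $\xi'$ has compact support). These convergences let me pass to the limit in every term of $R^\eta[\psi]$ by dominated convergence, producing exactly the first five groups of terms appearing in the lemma's inequality. For the measure term, compact support of $\xi'$ ensures $\partial_i\xi(v^\eta)\psi$ is uniformly bounded by $\sup_u|\xi'(u)|\,\|\psi\|_{L^\infty(Q_T)}$, so
\begin{equation*}
  \Big|\sum_i \int_0^T\!\!\int_{\overline\Omega}\partial_i\xi(v^\eta)\psi\,d\mu_i^\eta\Big| \le C(\Omega)\sup_u|\xi'(u)|\,\|\psi\|_{L^\infty(Q_T)}\sum_i \|\mu_i\|_{\mathcal M(\overline\Omega\times[0,T))},
\end{equation*}
uniformly in $\eta$ (the constant $C(\Omega)$ accounts for any extension used to handle the boundary $\partial\Omega$). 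Rearranging and taking $\eta\to0$ yields the claimed inequality.

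The main obstacle is the Hessian term $\int\partial_j\partial_k\xi(v^\eta)z_j^\eta\cdot\nabla v_k^\eta\,\psi$: since it is a product of three factors only weakly controlled at the outset, its passage to the limit relies crucially on the \emph{strong} $L^2$ convergence of both $z^\eta$ and $\nabla v^\eta$ (a special feature of mollification of a fixed function, not of a general approximation) combined with the pointwise convergence and uniform $L^\infty$ bound for $\partial_j\partial_k\xi(v^\eta)$. A secondary subtlety is the boundary term: one must verify that $q_i^\eta$ is meaningful as the trace of $z_i^\eta\cdot\nu$, for which a localization argument and density of $C^\infty(\overline\Omega)$ traces may be needed, but this is standard once the interior chain rule is established.
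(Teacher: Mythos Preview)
The paper does not give its own proof of this lemma; it is quoted verbatim as Lemma~4 of Fischer \cite{Fis15}. From the way the paper invokes that proof in Lemma~\ref{lem.varphi} (referring to \cite[pp.~579--582]{Fis15} and setting $\widehat v_i=\rho_\eps* u_i^{(\delta)}$ with ``$\rho_\eps$ the standard mollifier on $\Omega$''), one sees that Fischer's argument is based on \emph{spatial} mollification, so that $\widehat v(\cdot,t)$ is smooth in $x$ and the functions $\pa_i\xi(\widehat v)\psi$ are directly admissible as spatial test functions. Your route via \emph{temporal} mollification is therefore genuinely different.

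There is, however, a gap in your argument at exactly the point where the two routes diverge. After time mollification you correctly have $v^\eta\in C^\infty(\text{time};H^1(\Omega))$, so the pointwise chain rule $\pa_t\xi(v^\eta)=\sum_i\pa_i\xi(v^\eta)\pa_t v_i^\eta$ holds between functions. But to replace $\pa_t v_i^\eta$ by the right-hand side of the mollified equation you must use that equation with the test function $\phi=\pa_i\xi(v^\eta(\cdot,t))\psi(\cdot,t)$, which lies only in $H^1(\Omega)\cap L^\infty(\Omega)$, not in $C^\infty(\overline\Omega)$. Extending by density from smooth $\phi$ works for the $w_i^\eta$, $z_i^\eta$ and (via traces) $q_i^\eta$ terms, but not for the measure term: approximants $\phi_n\to\phi$ in $H^1$ need not converge $\mu_i^\eta$-a.e.\ when $\mu_i^\eta$ is singular with respect to Lebesgue measure, so $\int\phi_n\,d\mu_i^\eta$ has no reason to converge to $\int\phi\,d\mu_i^\eta$. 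Your one-line estimate of the measure contribution presupposes that this pairing appears in the identity, which is precisely what needs justification. The argument can be rescued by choosing the $\phi_n$ with $\|\phi_n\|_{L^\infty}\le\sup|\xi'|\,\|\psi\|_{L^\infty}$ and observing that, since every \emph{other} term in the tested equation converges, the sequence $\int\phi_n\,d\mu_i^\eta$ must converge to \emph{some} limit bounded in modulus by $\sup|\xi'|\,\|\psi\|_{L^\infty}\|\mu_i^\eta\|_{\mathcal M}$; this residual is exactly the renormalization defect. You should make this step explicit, as it is the crux of the lemma. Fischer's spatial mollification avoids the difficulty because $\pa_i\xi(\widehat v)\psi$ is then smooth in $x$ and is an admissible test function from the outset.
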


We apply this lemma to \eqref{2.d}.

\begin{lemma}\label{lem.varphi}
Let $u^{(\delta)}$ be a weak solution to \eqref{2.d}. Then, for all
$\psi\in C_0^\infty(\overline{\Omega}\times[0,T))$, $L\in\N$, and
$i=1,\ldots,n$,
\begin{align}
  -\int_0^T & \int_\Omega\varphi_i^L(u^{(\delta)})\pa_t\psi dxdt
	- \int_\Omega\varphi_i^L(u^0)\psi(\cdot,0)dx \nonumber \\
	&= -\int_0^T\int_\Omega\sum_{j,k=1}^n\pa_j\pa_k\varphi_i^L(u^{(\delta)})
	\bigg(\sum_{\ell=1}^n A_{j\ell}(u^{(\delta)})\na u_\ell^{(\delta)}
	- u_j^{(\delta)}b_j\bigg)\cdot\na u_k^{(\delta)}\psi dxdt \label{5.weak0} \\
	&\phantom{xx}{}- \int_0^T\int_\Omega\sum_{j=1}^n\pa_j\varphi_i^L(u^{(\delta)})
	\bigg(\sum_{\ell=1}^n A_{j\ell}(u^{(\delta)})\na u_\ell^{(\delta)}
	- u_j^{(\delta)}b_j\bigg)\cdot\na \psi dxdt \nonumber \\
	&\phantom{xx}{}+ \int_0^T\int_\Omega\sum_{j=1}^n\pa_j\varphi_i^L(u^{(\delta)})
	\frac{f_j(u^{(\delta)})\psi}{1+\delta|f(u^{(\delta)})|}dxdt, \nonumber
\end{align}
where we recall definition \eqref{5.varphi} of $\varphi_i^L$.
\end{lemma}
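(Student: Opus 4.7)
The identity \eqref{5.weak0} is nothing but the chain rule applied to the weak formulation \eqref{2.d}, with renormalizing function $\xi := \varphi_i^L$ (for fixed index $i$). My plan is therefore to invoke Lemma~\ref{lem.chain}: property~(L3) guarantees that $(\varphi_i^L)'$ has compact support, which is exactly the regularity required of $\xi$ by that lemma; and since \eqref{2.d} is a genuine (non-renormalized) weak equation, there will be no defect measure. Consequently, the right-hand side of the estimate in Lemma~\ref{lem.chain} will vanish and the chain-rule identity will be an exact equality, namely \eqref{5.weak0}.

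Concretely, I would first match \eqref{2.d} to the hypothesis of Lemma~\ref{lem.chain} by taking $v_i := u_i^{(\delta)} \in L^2(0,T;H^1(\Omega))$, $v_{0,i} := u_i^0$, $w_j := f_j(u^{(\delta)})/(1+\delta|f(u^{(\delta)})|) \in L^\infty(Q_T)$, $z_j := u_j^{(\delta)} b_j - \sum_{\ell=1}^n A_{j\ell}(u^{(\delta)})\na u_\ell^{(\delta)}$, $q_j := 0$, and $\mu_i := 0$. Testing \eqref{2.d} componentwise (scalar $\phi_j$ placed in the $j$th slot, the other components zero) reproduces exactly the hypothesis of Lemma~\ref{lem.chain}. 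Applying its conclusion with $\xi = \varphi_i^L$, using $\sum_i\|\mu_i\|_{\mathcal M} = 0$, and tracking the sign $z_j = -(\sum_\ell A_{j\ell}\na u_\ell^{(\delta)} - u_j^{(\delta)} b_j)$ in both the $\na\psi$ and the $\na u_k^{(\delta)}\psi$ terms then gives \eqref{5.weak0} verbatim.

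The one point that requires care, and which I expect to be the main obstacle, is the integrability of $z_j$: Lemma~\ref{lem.chain} is stated for $z_j \in L^2(Q_T;\R^n)$, whereas Lemma~\ref{lem.delta} only yields $z_j \in L^s(Q_T;\R^n)$ with $s = (2d+2)/(2d+1) < 2$. I would close this gap by exploiting that $(\varphi_i^L)'$ and $(\varphi_i^L)''$ are compactly supported (properties (L3), (L5), (L7)): they vanish outside the compact set $K_L := \{v \in [0,\infty)^n : \sum_k v_k \le 2L\}$, and on $K_L$ the coefficients $A_{j\ell}$ are bounded by some $C(L)$, so the only piece of $z_j$ that ever enters \eqref{5.weak0} is $z_j\,\mathbf 1_{\{u^{(\delta)}\in K_L\}}$, which does lie in $L^2(Q_T)$ thanks to $\na u_\ell^{(\delta)} \in L^2(Q_T)$ and $b \in L^\infty(Q_T)$. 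To make this fully rigorous I would either re-run the mollification argument from the proof of Lemma~\ref{lem.chain}, noting that $z_j$ enters only through the bilinear products $\pa_j\xi(v)\,z_j$ and $\pa_j\pa_k\xi(v)\,z_j\cdot\na v_k$ (and is therefore insensitive to the part of $z_j$ outside $\operatorname{supp}(\varphi_i^L)'$), or apply Lemma~\ref{lem.chain} first to the pointwise truncation $\min(u^{(\delta)}, M\mathbf 1)$, whose flux is bounded, and then let $M\to\infty$, using that $\varphi_i^L$ and its derivatives at $\min(u^{(\delta)},M\mathbf 1)$ coincide with their values at $u^{(\delta)}$ as soon as $M \ge 2L$.
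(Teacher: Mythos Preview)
Your proposal is correct and follows essentially the same route as the paper: the paper also applies Lemma~\ref{lem.chain} with exactly your choices $v_i=u_i^{(\delta)}$, $v_{0,i}=u_i^0$, $w_i=f_i(u^{(\delta)})/(1+\delta|f(u^{(\delta)})|)$, $q_i=0$, $\mu_i=0$, and $z_i=-(\sum_\ell A_{i\ell}(u^{(\delta)})\na u_\ell^{(\delta)}-u_i^{(\delta)}b_i)$, and identifies the same obstacle that $z_i$ lies only in $L^s$ rather than $L^2$. To close the gap the paper takes your option~(a): it observes that $\pa_j\pa_k\varphi_i^L(u^{(\delta)})z_j\in L^2(Q_T;\R^n)$ thanks to the compact support of $(\varphi_i^L)'$ and $\na u_\ell^{(\delta)}\in L^2$, and then indicates how the mollification argument of \cite[pp.~579--582]{Fis15} goes through with this regularity in place of $z_j\in L^2$.
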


\begin{proof}
Taking the test function
$\phi=(\phi_1,\ldots,\phi_n)$ with $\phi_j=\delta_{ij}\psi$ and
$\psi\in C_0^\infty(\overline{\Omega}\times[0,T))$, we see that $u^{(\delta)}$ solves
\begin{align}
  -&\int_0^T \int_\Omega u_i^{(\delta)}\pa_t\psi dxdt 
	- \int_\Omega u_i^0\psi(\cdot,0)dx \nonumber \\
	&{}+ \int_0^T\int_\Omega\na\psi\cdot\bigg(\sum_{j=1}^n A_{ij}(u^{(\delta)})
	\na u_j^{(\delta)} - u_i^{(\delta)}b_i\bigg)dxdt
	=\int_0^T\int_\Omega\frac{f_i(u^{(\delta)})\psi}{1+\delta|f(u^{(\delta)})|}dxdt.
	\label{5.d}
\end{align}
In Lemma \ref{lem.chain}, we choose $\xi(v)=\varphi_i^L(v)$, $v_i=u_i^{(\delta)}
\in L^2(0,T;H^1(\Omega))$, $v_{0,i}=u_i^0\in L^1(\Omega)$, 
$w_i=f_i(u^{(\delta)})/(1+\delta|f(u^{(\delta)})|)\in L^1(Q_T)$,
$\mu_i=0$, $q_i=0$, and 
$z_i = -(\sum_{j=1}^n A_{ij}(u^{(\delta)})\na u_j^{(\delta)} - u_i^{(\delta)}b_i)
\in L^s(0,T;L^s(\Omega;\R^n))$ with $s=(2d+2)/(2d+1)$, where the regularity 
statements for $u_i^{(\delta)}$ and $z_i$ are obtained from Lemma \ref{lem.delta}.
If $z_i\in L^2(0,T;L^2(\Omega;\R^n))$, the weak formulation
\eqref{5.weak0} is a direct result of Lemma \ref{lem.chain}.
We claim that the lemma can be applied also in the present situation. Indeed,
as the support of $(\varphi_i^L)'$ is bounded and $\na u_\ell^{(\delta)}\in
L^2(Q_T)$, we have
$$
  \pa_j\pa_k\varphi_i^L(u^{(\delta)})z_j
	= \pa_j\pa_k\varphi_i^L(u^{(\delta)})\bigg(\sum_{\ell=1}^n A_{j\ell}(u^{(\delta)})
	\na u_\ell^{(\delta)} - u_j^{(\delta)}b_j\bigg)
	\in L^2(Q_T;\R^n),
$$
and this regularity is sufficient for the proof of Lemma \ref{lem.chain}.
More precisely, let \cite[page 579]{Fis15}
$$
  \widehat v_i = \rho_\eps * u_i^{(\delta)}, \quad
	\widehat w_i = \rho_\eps * w_i + \diver\bigg(\sum_{\ell=1}^n 
	A_{i\ell}(\rho_\eps * u^{(\delta)})\na(\rho_\eps * u_\ell^{(\delta)})
	- (\rho_\eps * u_i^{(\delta)})b_i\bigg),
$$
where $\rho_\eps$ is the standard mollifier on $\Omega$. Then the second line 
on page 580 in \cite{Fis15} can be replaced by
\begin{align*}
  \sum_{j,k=1}^n\int_0^T\int_\Omega & \pa_j\pa_k\varphi_i^L(\rho_\eps * u^{(\delta)})
	\bigg(\sum_{\ell=1}^n A_{j\ell}(\rho_\eps * u^{(\delta)})
	\na(\rho_\eps * u_\ell^{(\delta)}) - (\rho_\eps * u_j^{(\delta)})b_j\bigg) \\
	&{}\times \na(\rho_\eps * u_k^{(\delta)})\psi dxdt.
\end{align*}
The regularity $\na u_\ell^{(\delta)}\in L^2(Q_T)$ and condition (L3) are
sufficient to pass to the limit $\eps\to 0$. The corresponding term on page 582, line 5 
in \cite{Fis15} can be treated in a similar way.
Consequently, Lemma \ref{lem.chain} implies \eqref{5.weak0}.
\end{proof}

\begin{remark}\rm
Note that if (H5'') with $a_{i0}>0$ holds, the argumentation of the previous
proof is slightly different.
Because of the $L^2$ bound of $\na(u_i^{(\delta)})^{1/2}$, we write
$$
   \pa_j\pa_k\varphi_i^L(u^{(\delta)})z_j
	= \pa_j\pa_k\varphi_i^L(u^{(\delta)})\bigg(2\sum_{\ell=1}^n A_{j\ell}(u^{(\delta)})
	(u_\ell^{(\delta)})^{1/2}\na(u_\ell^{(\delta)})^{1/2}
	- u_j^{(\delta)}b_j\bigg),
$$
and this expression is still in $L^2(Q_T;\R^n)$ taking into account 
the compact support of $\pa_k\varphi_i^L$. This argument can be also used in the 
following proofs. 
\qed
\end{remark}

Now, we can perform the limit $\delta\to 0$. The following result is the key lemma.

\begin{lemma}\label{lem.limdelta}
There exists a subsequence of $(u^{(\delta)})$ (not relabeled) and a nonnegative function 
$u\in L^2(0,T;$ $H^1(\Omega;\R^n))$ satisfying $\int_\Omega h(u)dx<\infty$ such that,
as $\delta\to 0$,
\begin{align}
  u^{(\delta)} \to u &\quad\mbox{strongly in }L^2(Q_T), \label{5.strong} \\
	u^{(\delta)} \rightharpoonup u &\quad\mbox{weakly in }L^2(0,T;H^1(\Omega)).
	\label{5.weak}
\end{align}
Moreover, $u$ solves for all $L\in\N$, $i=1,\ldots,n$, and $\psi\in
C_0^\infty(\overline{\Omega}\times[0,T))$,
\begin{align}
  -\int_0^T&\int_\Omega\varphi_i^L(u)\pa_t\psi dxdt 
	- \int_\Omega\varphi_i^L(u^0)\psi(\cdot,0)dx 
	= -\int_0^T\int_\Omega\psi d\mu_i^L(x,t) \nonumber \\
	&{}- \int_0^T\int_\Omega\sum_{j=1}^n\pa_j\varphi_i^L(u)\bigg(\sum_{\ell=1}^n
	A_{j\ell}(u)\na u_\ell - u_jb_j\bigg)\cdot\na\psi dxdt \label{5.weak1} \\
	&{}+ \int_0^T\int_\Omega\sum_{j=1}^n\pa_j\varphi_i^L(u)f_j(u)
	\psi dxdt, \nonumber
\end{align}
where $(\mu_i^L)_{L\in\N}$ is a sequence of signed Radon measures satisfying
\begin{equation}\label{5.mu}
  \lim_{L\to\infty}|\mu_i^L|\big(\overline{\Omega}\times[0,T)\big) = 0.
\end{equation}
\end{lemma}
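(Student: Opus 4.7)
The overall strategy follows \cite{Fis15}: use the $\delta$-uniform bounds from Lemma~\ref{lem.delta} to get weak $L^2(0,T;H^1)$ compactness, compensate for the lack of uniform control on the reaction by the truncation $\varphi_i^L$ (which is bounded and has bounded derivative support by (L1), (L3), (L5)), and pass to the limit in the truncated equation \eqref{5.weak0}. First I would extract, from the bounds $\int_\Omega h(u^{(\delta)})dx\le C$, $\|u^{(\delta)}\|_{L^2(0,T;H^1)}\le C$, $\|u^{(\delta)}\|_{L^{2+2/d}(Q_T)}\le C$, and $\|A(u^{(\delta)})\na u^{(\delta)}\|_{L^s(Q_T)}\le C$, a subsequence with $u^{(\delta)}\rightharpoonup u$ weakly in $L^2(0,T;H^1(\Omega))$. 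For fixed $L\in\N$, $\varphi_i^L(u^{(\delta)})$ is uniformly bounded in $L^\infty\cap L^2(0,T;H^1)$, while inspection of \eqref{5.weak0} shows $\pa_t\varphi_i^L(u^{(\delta)})$ bounded in a negative Sobolev space uniformly in $\delta$: the quadratic-gradient integrand is bounded in $L^1$ by $K_2\|\na u^{(\delta)}\|_{L^2}^2$ via (L7) (which yields $|\pa_j\pa_k\varphi_i^L|\cdot|A_{j\ell}|\le C$); the drift/gradient term is bounded using the compact support (L3) of $\pa_j\varphi_i^L$ and $A(u)\na u\in L^s$; the reaction term is bounded since $f_j$ is continuous (H2) and $\pa_j\varphi_i^L$ is compactly supported. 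Aubin--Lions then gives $\varphi_i^L(u^{(\delta)})\to v_i^L$ strongly in $L^2(Q_T)$, and a standard diagonal procedure over $L$ yields a single subsequence valid for all $L$.

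The a.e.\ convergence of $u^{(\delta)}$ follows from (L2) together with the uniform $L^\infty(0,T;L^1)$ bound: the set $\{(x,t):\sum_k u_k^{(\delta)}\ge L\}$ has measure at most $C/L$ uniformly in $\delta$, and on its complement $\varphi_i^L(u^{(\delta)})=u_i^{(\delta)}$; letting $L\to\infty$ along the diagonal subsequence, the a.e.\ limit of $\varphi_i^L(u^{(\delta)})$ pins down $u_i^{(\delta)}\to u_i$ a.e.\ in $Q_T$, while the uniform $L^{2+2/d}$ bound combined with Vitali's theorem upgrades this to $u^{(\delta)}\to u$ strongly in $L^2(Q_T)$. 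Fatou's lemma preserves the entropy bound, and the weak $H^1$ limit identifies the limit with $u$.

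Passage to the limit in \eqref{5.weak0}, for fixed $L$, is then term-by-term: time-derivative and initial-data terms pass by the $L^\infty$ bound on $\varphi_i^L$ and dominated convergence; the first-order drift/gradient term passes by combining the strong $L^p$ convergence of $\pa_j\varphi_i^L(u^{(\delta)})$ (bounded, continuous, a.e.\ convergent) with the weak $L^s$ convergence of $A(u^{(\delta)})\na u^{(\delta)}-u^{(\delta)}b$ already established in Lemma~\ref{lem.delta}; the reaction term passes by dominated convergence, since on the compact support of $\pa_j\varphi_i^L$ the values $|f_j(u^{(\delta)})|$ are uniformly bounded and $(1+\delta|f|)^{-1}\to 1$. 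For the quadratic-gradient term, the integrand of \eqref{1.grad} (together with the analogous drift contribution $\pa_j\pa_k\varphi_i^L u_j^{(\delta)}b_j\cdot\na u_k^{(\delta)}\psi$) is bounded in $L^1(Q_T)$ uniformly in $\delta$ by (L7) and the entropy estimate. Banach--Alaoglu in the space of signed Radon measures extracts (along a further subsequence) a weak-$*$ limit $\mu_i^L$ on $\overline\Omega\times[0,T)$, giving exactly \eqref{5.weak1}.

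The main obstacle is the vanishing \eqref{5.mu} of the total variation of $\mu_i^L$ as $L\to\infty$. The integrand splits into a constant-coefficient part $a_{j0}\delta_{j\ell}\pa_j\pa_k\varphi_i^L\na u_j^{(\delta)}\cdot\na u_k^{(\delta)}$, whose $L^1(Q_T)$ norm is controlled by $\sup_v|\pa_j\pa_k\varphi_i^L|\cdot\|\na u^{(\delta)}\|_{L^2}^2$ and tends to zero by (L6); and cross-diffusion contributions $a_{jm}u_m^{(\delta)}\pa_j\pa_k\varphi_i^L\na u_\ell^{(\delta)}\cdot\na u_k^{(\delta)}$ for which (L7) gives only a \emph{uniform} bound $|u_m\pa_j\pa_k\varphi_i^L|\le K_2$ and not decay. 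For these terms, the argument exploits that $\operatorname{supp}(\pa_j\pa_k\varphi_i^L)\subset\{v:\sum_k v_k\in[L,2L]\}$, whose preimage in $Q_T$ has measure tending to zero uniformly in $\delta$ (by the $L^\infty(0,T;L^1)$ bound), combined with the uniform higher integrability $u^{(\delta)}\in L^{2+2/d}(Q_T)$ and (L7), in a Vitali-type estimate in the spirit of \cite[Proof of Lemma~5]{Fis15}. The drift contribution is handled identically via $|u_j\pa_j\pa_k\varphi_i^L|\le K_2$ combined with (L6). The delicate balance between (L6) (pointwise decay, insufficient alone because $|A_{j\ell}(u)|$ can grow) and (L7) (uniform bound but no decay) is the core technical point.
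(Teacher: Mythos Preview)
Your overall strategy matches the paper's: Aubin--Lions on $\varphi_i^L(u^{(\delta)})$ via the bounds from Lemma~\ref{lem.delta} and (L1), (L3), (L5), (L7), a diagonal argument in $L$, identification of the a.e.\ limit via (L2) and the $L^\infty(0,T;L^1)$ bound, and weak-$*$ compactness of the quadratic-gradient term as a Radon measure. Those parts are fine (the paper additionally extends \eqref{5.weak0} to test functions not vanishing at $t=T$ in order to read off the time-derivative bound in a measure-valued dual space, which you gloss over, but this is a technicality).

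The genuine gap is in your argument for \eqref{5.mu}. For the cross-diffusion piece you correctly note that (L7) gives only $|u_m\,\pa_j\pa_k\varphi_i^L|\le K_2$ with no decay, and that $\operatorname{supp}\pa_j\pa_k\varphi_i^L\subset\{L\le\sum_k v_k\le 2L\}$, whose preimage in $Q_T$ has measure $\le C/L$. But the resulting bound is $K_2\int_{\{L\le\sum_k u_k^{(\delta)}\le 2L\}}|\na u^{(\delta)}|^2$, and you only know $|\na u^{(\delta)}|^2\in L^1(Q_T)$ uniformly---there is no higher integrability of the \emph{gradient} (your $L^{2+2/d}$ bound is for $u^{(\delta)}$, not $\na u^{(\delta)}$). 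Small measure of the integration set therefore does not force the integral to be small uniformly in~$\delta$, and no Vitali-type argument closes this: the gradient mass could concentrate on different level sets for different~$\delta$.

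The paper's device is a level-set (``layer-cake'') decomposition: write
\[
  |\mu_i^{(\delta,L)}|(Q_T)\le C\sum_{K=1}^\infty F_K^{(\delta)}\,G_K^L,\qquad
  F_K^{(\delta)}=\int_{\{K-1\le|u^{(\delta)}|<K\}}\big(|\na u^{(\delta)}|+\|b\|_\infty\big)|\na u^{(\delta)}|,
\]
with $G_K^L=\sup_{K-1\le|v|<K}(1+|v|)\sum_{j,k}|\pa_j\pa_k\varphi_i^L(v)|$. The entropy estimate gives $\sum_K F_K^{(\delta)}\le C$ uniformly, so by Fatou (along a subsequence) $\sum_K\lim_\delta F_K^{(\delta)}\le C$. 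By (L7), $G_K^L\le K_2$ uniformly, and by (L6), $G_K^L\to 0$ as $L\to\infty$ for each fixed~$K$. Lower semicontinuity of total variation then yields
\[
  |\mu_i^L|(\overline\Omega\times[0,T))\le \liminf_{\delta\to 0}|\mu_i^{(\delta,L)}|(Q_T)
  \le C\sum_{K=1}^\infty G_K^L\,\lim_{\delta\to 0}F_K^{(\delta)},
\]
and dominated convergence in the index $K$ gives $|\mu_i^L|\to 0$ as $L\to\infty$. The point is that passing $\delta\to 0$ \emph{first} fixes a summable sequence $(\lim_\delta F_K^{(\delta)})_K$, converting the problem into an ordinary dominated-convergence argument; this sidesteps the lack of gradient equi-integrability that blocks your approach.
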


\begin{proof}
{\em Step 1.} The weak formulation \eqref{5.weak1} holds for all test functions
which vanish at $t=T$. We wish to derive a weak formulation valid for all
test functions $\psi\in C^\infty(\overline\Omega\times[0,T])$. To this end,
we introduce for $m\in\N$ the functions
$$
  g_m(t) = \left\{\begin{array}{l@{\quad\mbox{if }}l} 
	1 & t\in (-1,T-\frac{1}{m}],\\
  2m(T-\frac{1}{m}-t)+1 & t\in (T-\frac{1}{m},T-\frac{1}{2m}),\\
  0 & t\in [T-\frac{1}{2m},T+1).\,
  \end{array}\right.
$$
Then the function $g_m$ is continuous on $(-1,T+1)$, its weak derivative equals
$$
  g'_m(t) = \left\{\begin{array}{l@{\quad\mbox{if }}l}
      -2m & t\in (T-\frac{1}{m},T-\frac{1}{2m}),\\
      0 & t\in (-1,T-\frac{1}{m}]\cup[T-\frac{1}{2m},T+1),
      \end{array}\right.
$$
$\lim_{m\to\infty}g_m(t)=1$ for $t\in[0,T)$, and $\lim_{m\to\infty}g_m(t)=0$
for $t\in[T,T+1)$. Set $g_m^\eps:=\eta_\eps*g_m$,
where $\eta_\eps$ is the standard mollifier on $\R$; see the definition in 
\cite[Section C.4]{Eva10}. In particular (see \cite[Section 5.3.1, Theorem 1]{Eva10}), 
$g_m^\eps\in C_0^\infty([0,T))\cap
C^\infty([0,T])$, $(g_m^\eps)'=\eta_\eps*g_m'$
on $(-1+\eps,T+1-\eps)$ for sufficiently small $\eps>0$, and
\begin{equation}\label{5.eps}
  g_m^\eps\to g_m\quad\mbox{in }C^0([0,T]), \quad
	(g_m^\eps)'\to g_m' \quad\mbox{in }L^2(0,T)\mbox{ as }\eps\to 0.
\end{equation}

Let $\psi\in C^\infty(\overline\Omega\times[0,T])$. Then $\psi g_m^\eps
\in C_0^\infty(\overline\Omega\times[0,T))$, and we can use this function
as a test function in \eqref{5.weak0}:
\begin{align*}
  -\int_0^T & \int_\Omega\varphi_i^L(u^{(\delta)})\pa_t(\psi g_m^\eps) dxdt
	- \int_\Omega\varphi_i^L(u^0)(\psi g_m^\eps)(\cdot,0)dx \\
	&= -\int_0^T\int_\Omega\sum_{j,k=1}^n\pa_j\pa_k\varphi_i^L(u^{(\delta)})
	\bigg(\sum_{\ell=1}^n A_{j\ell}(u^{(\delta)})\na u_\ell^{(\delta)}
	- u_j^{(\delta)}b_j\bigg)\cdot\na u_k^{(\delta)}(\psi g_m^\eps) dxdt \\
	&\phantom{xx}{}- \int_0^T\int_\Omega\sum_{j=1}^n\pa_j\varphi_i^L(u^{(\delta)})
	\bigg(\sum_{\ell=1}^n A_{j\ell}(u^{(\delta)})\na u_\ell^{(\delta)}
	- u_j^{(\delta)}b_j\bigg)\cdot\na \psi g_m^\eps dxdt \\
	&\phantom{xx}{}+ \int_0^T\int_\Omega\sum_{j=1}^n\pa_j\varphi_i^L(u^{(\delta)})
	\frac{f_j(u^{(\delta)})\psi g_m^\eps}{1+\delta|f(u^{(\delta)})|}dxdt.
\end{align*}
Taking into account the compact support of $(\varphi_i^L)'$, by (L3),
the uniform bounds from Lemma \ref{lem.delta}, and the
convergence properties \eqref{5.eps}, we can pass to the limit $\eps\to 0$
in the previous equation, leading to
\begin{align*}
  -\int_0^T & \int_\Omega\varphi_i^L(u^{(\delta)})\pa_t\psi g_m dxdt
	- \int_0^T\int_\Omega\varphi_i^L(u^{(\delta)})\psi g_m' dx
	- \int_\Omega\varphi_i^L(u^0)(\psi g_m)(\cdot,0)dx \\
	&= -\int_0^T\int_\Omega\sum_{j,k=1}^n\pa_j\pa_k\varphi_i^L(u^{(\delta)})
	\bigg(\sum_{\ell=1}^n A_{j\ell}(u^{(\delta)})\na u_\ell^{(\delta)}
	- u_j^{(\delta)}b_j\bigg)\cdot\na u_k^{(\delta)}(\psi g_m) dxdt \\
	&\phantom{xx}{}- \int_0^T\int_\Omega\sum_{j=1}^n\pa_j\varphi_i^L(u^{(\delta)})
	\bigg(\sum_{\ell=1}^n A_{j\ell}(u^{(\delta)})\na u_\ell^{(\delta)}
	- u_j^{(\delta)}b_j\bigg)\cdot\na (\psi g_m) dxdt \\
	&\phantom{xx}{}+ \int_0^T\int_\Omega\sum_{j=1}^n\pa_j\varphi_i^L(u^{(\delta)})
	\frac{f_j(u^{(\delta)})\psi g_m}{1+\delta|f(u^{(\delta)})|}dxdt.
\end{align*}

Next, we perform the limit $m\to\infty$. The only delicate term is the integral
involving $g_m'$:
\begin{align*}
  -\int_0^T\int_\Omega \varphi_i^L(u^{(\delta)})\psi g_m'dxdt
  &= 2m\int_{T-1/m}^{T-1/(2m)}\int_\Omega \varphi_i^L(u^{(\delta)})\psi dxdt \\
  &\to\int_\Omega \varphi_i^L(u^{(\delta)}(\cdot,T))\psi(\cdot,T)dx
	\quad\mbox{as }m\to\infty.
\end{align*} 
For the other terms, we employ the uniform bounds in Lemma \ref{lem.delta}, 
the pointwise
convergence $g_m(t)\to 1$ for $t\in[0,T)$, and Lebesgue's dominated convergence
theorem. Then, in the limit $m\to\infty$,
\begin{align}
  -\int_0^T&\int_\Omega \varphi_i^L(u^{(\delta)})\pa_t \psi dxdt
	+ \int_\Omega \varphi_i^L(u^{(\delta)}(\cdot,T))\psi(\cdot,T)dx
  - \int_\Omega \varphi_i^L(u^0)\psi(,\cdot0) dx \nonumber \\
  &= -\int_0^T\int_\Omega\sum_{j,k=1}^n \pa_j\pa_k\varphi_i^L(u^{(\delta)})
	\bigg(\sum_{\ell=1}^nA_{j\ell}(u^{(\delta)})\na u^{(\delta)}_\ell
	- u^{(\delta)}_j b_j\bigg)\cdot\na u^{(\delta)}_k\psi dxdt \label{5.aux} \\
  &\phantom{xx}{}- \int_0^T\int_\Omega\sum_{j=1}^n \pa_j\varphi_i^L(u^{(\delta)})
	\bigg(\sum_{\ell=1}^nA_{j\ell}(u^{(\delta)})\na u^{(\delta)}_\ell
	- u^{(\delta)}_j b_j\bigg)\cdot\na \psi dxdt \nonumber \\
  &\phantom{xx}{}+ \int_0^T\int_\Omega\sum_{j=1}^n \pa_j\varphi_i^L(u^{(\delta)})
	\frac{f_j(u^{(\delta)})\psi}{1+\delta|f(u^{(\delta)})|}dxdt. \nonumber
\end{align}
This holds for all $\psi\in C^\infty(\overline\Omega\times[0,T])$.
In fact, by a density argument, the weak formulation also holds for all
$\psi\in C^0([0,T];H^p(\Omega))$, where $p>d/2+1$ (such that the embedding
$H^p(\Omega)\hookrightarrow W^{1,\infty}(\Omega)$ is continuous).

{\em Step 2.} We claim that a subsequence of 
$(\varphi_i^L(u^{(\delta)}))$ is convergent
in the limit $\delta\to 0$. Observing that the dual space of 
$C^0([0,T];H^p(\Omega))$ is ${\mathcal M}([0,T];H^p(\Omega)')$, where
${\mathcal M}$ denotes the space of Radon measures, we find that 
\begin{align*}
  \|\pa_t \varphi_i^L(u^{(\delta)})\|_{{\mathcal M}([0,T];H^p(\Omega)')}
	&= \sup_{\|\psi\|_{C^0([0,T];H^p(\Omega))}\le 1}
	\bigg|-\int_0^T\int_\Omega \varphi_i^L(u^{(\delta)})\pa_t \psi dxdt \\
	&\phantom{xx}{}+ \int_\Omega \varphi_i^L(u^{(\delta)}(\cdot,T))\psi(\cdot,T)dx
  - \int_\Omega \varphi_i^L(u^0)\psi(\cdot,0) dx \bigg|.
\end{align*}
We insert \eqref{5.aux} and take into account the uniform bounds
in Lemma \ref{lem.delta} and the compact support of $(\varphi_i^L)'$.
Then the right-hand side of \eqref{5.aux} can be bounded uniformly in $\delta$:
\begin{equation}\label{5.AL1}
  \|\pa_t \varphi_i^L(u^{(\delta)})\|_{{\mathcal M}([0,T];H^p(\Omega)')}
  \le C(L,u^0,b,T).
\end{equation}
By (L1) and (L5), the function $\varphi_i^L$ is growing at most linearly 
and its gradient is bounded. 
Therefore, the gradient bound for $u^{(\delta)}$ shows that
\begin{equation}\label{5.AL2}
  \|\varphi_i^L(u^{(\delta)})\|_{L^2(0,T;H^1(\Omega))} \le C(u^0,b,T).
\end{equation}
Estimates \eqref{5.AL1} and \eqref{5.AL2} allow us to apply the Aubin-Lions
lemma in the version of \cite[Section 7.3, Corollary 7.9]{Rub05} to obtain
the existence of a subsequence of $(\varphi_i^L(u^{(\delta)}))$, which is
not relabeled, such that, as $\delta\to 0$,
\begin{equation}\label{5.conv}
  \varphi_i^L(u^{(\delta)})\to v_i^L\quad\mbox{strongly in }L^2(Q_T)
\end{equation}
for some nonnegative function $v_i^L$.

We claim that the subsequence can be chosen in such a way
that it is independent of $i\in\{1,\ldots,n\}$ and $L\in\N$. Since the set
$\{1,\ldots,n\}$ is finite, we have to prove this statement only for $L\in\N$.
The idea is to apply a diagonal argument. Let $i\in\{1,\ldots,n\}$ be fixed.
By the Aubin-Lions lemma,
there exists a subsequence $(\delta^{1}_k)$ such that
$\varphi_i^1(u^{(\delta^{1}_k)})\to v_i^1$ as $k\to\infty$, and there
exists a subsequence $(\delta_k^{2})$ of $(\delta_k^{1})$ such that
$\varphi_i^2(u^{(\delta_k^{2})})\to v_i^2$. Continuing this argument, 
we find a subsequence $(\delta^{L}_k)$ of $(\delta^{L-1}_k)$ such that
$\varphi_i^L(u^{(\delta^{L}_k)})\to v_i^L$. In other words,
\begin{align*}
  \varphi_i^1(u^{(\delta^{1}_1)}),\
	\varphi_i^1(u^{(\delta^{1}_2)}),\
	\varphi_i^1(u^{(\delta^{1}_3)}),\ldots &\to v_i^1, \\
  \varphi_i^1(u^{(\delta^{2}_1)}),\
	\varphi_i^1(u^{(\delta^{2}_2)}),\
	\varphi_i^1(u^{(\delta^{2}_3)}),\ldots &\to v_i^1, \\	
  \varphi_i^1(u^{(\delta^{3}_1)}),\
	\varphi_i^1(u^{(\delta^{3}_2)}),\
	\varphi_i^1(u^{(\delta^{3}_3)}),\ldots &\to v_i^1,\mbox{ etc.}
\end{align*}
Thus, taking the diagonal terms as a new subsequence
$(\varphi_i^1(u^{(\delta^{k}_k)}))$, we have the convergence
$\varphi_i^1(u^{(\delta^{k}_k)})\to v_i^1$ as $k\to\infty$.
For $L=2$, we argue in a similar way,
\begin{align*}
  \varphi_i^2(u^{(\delta^{2}_2)}),\
	\varphi_i^2(u^{(\delta^{2}_3)}),\
	\varphi_i^2(u^{(\delta^{2}_4)}),\ldots &\to v_i^2, \\
  \varphi_i^2(u^{(\delta^{3}_2)}),\
	\varphi_i^2(u^{(\delta^{3}_3)}),\
	\varphi_i^2(u^{(\delta^{3}_4)}),\ldots &\to v_i^2, \\	
  \varphi_i^2(u^{(\delta^{4}_2)}),\
	\varphi_i^2(u^{(\delta^{4}_3)}),\
	\varphi_i^2(u^{(\delta^{4}_4)}),\ldots &\to v_i^2,\mbox{ etc.}
\end{align*}
Then the diagonal sequence converges, $\varphi_i^2(u^{(\delta^{k}_k)})\to v_i^2$
as $k\to\infty$. This argument can be continued, and we obtain a universal
subsequence $(u^{(\delta^{k}_k)})$, which is independent of $L$, such that
\eqref{5.conv} holds, and we call this subsequence simply $u^{(\delta)}$.

{\em Step 3.} We prove that, up to a subsequence, $u_i^{(\delta)}\to u_i$ a.e.
First, we claim that $(v_i^L)_{L\in\N}$ is a Cauchy sequence.
Let $K$, $L\in\N$ with $K>L$ be given. Then, by definition of $\varphi_i^L$ and
setting $v_\delta^L:=\frac{1}{L}\sum_{k=1}^n u_k^{(\delta)}-1$,
\begin{equation}\label{5.aux2}
\begin{aligned}
  \big\|\varphi_i^L(u^{(\delta)})-\varphi_i^K(u^{(\delta)})\big\|_{L^1(Q_T)}
	&\le \big\|u_i^{(\delta)}\big(\varphi(v_\delta^L)-\varphi(v_\delta^K)\big)
	\big\|_{L^1(Q_T)} \\
	&\phantom{xx}{}
	+ 2\big\|L\big(1-\varphi(v_\delta^L)\big) 
	- K\big(1-\varphi(v_\delta^K)\big)\|_{L^1(Q_T)}\\
	&=: I_1 + I_2.
\end{aligned}
\end{equation}
By the mean value theorem, H\"older's inequality, and the uniform bounds
in Lemma \ref{lem.delta}, we find that
\begin{align*}
  I_1 &\le \max_{s\in\R}|\varphi'(s)|\bigg\|u_i^{(\delta)}
	\bigg(\frac{1}{L}\sum_{k=1}^n u_k^{(\delta)}-\frac{1}{K}\sum_{k=1}^n u_k^{(\delta)}
	\bigg)\bigg\|_{L^1(Q_T)} \\
	&\le \max_{s\in\R}|\varphi'(s)|\bigg(\frac{1}{L}-\frac{1}{K}\bigg)
	\sum_{k=1}^n\|u_i^{(\delta)}u_k^{(\delta)}\|_{L^1(Q_T)} \\
	&\le \max_{s\in\R}|\varphi'(s)|\frac{1}{L}\|u_i^{(\delta)}\|_{L^2(Q_T)}
	\sum_{k=1}^n\|u_k^{(\delta)}\|_{L^2(Q_T)} 
	\le \frac{C(u^0,b,T)}{L}.
\end{align*}
For the expression $I_2$, we use the property $\varphi(-1)=1$ and the mean value
theorem again:
\begin{align*}
  I_2 &= 2\big\|L\big(\varphi(-1)-\varphi(v_\delta^L)\big)
	- K\big(\varphi(-1)-\varphi(v_\delta^K)\big)\big\|_{L^1(Q_T)} \\
	&= 2\bigg\|\sum_{k=1}^n u_k^{(\delta)}\bigg\{
	\varphi'\bigg(-1+\frac{\theta_1}{L}\sum_{k=1}^n u_k^{(\delta)}\bigg)
	- \varphi'\bigg(-1+\frac{\theta_2}{K}\sum_{k=1}^n u_k^{(\delta)}\bigg)\bigg\}
	\bigg\|_{L^1(Q_T)} \\
	&= 2\bigg\|\bigg(\sum_{k=1}^n u_k^{(\delta)}\bigg)^2
	\varphi''(\xi)\bigg(\frac{\theta_1}{L}-\frac{\theta_2}{K}\bigg)\bigg\|_{L^1(Q_T)},
\end{align*}
where $\theta_1=\theta_1(x,t)$, $\theta_2=\theta_2(x,t)\in(0,1)$ and $\xi\in\R$.
Therefore,
$$
  I_2 \le 2\max_{s\in\R}|\varphi''(s)|\frac{1}{L}\bigg\|\sum_{k=1}^n u_k^{(\delta)}
	\bigg\|_{L^1(Q_T)}^2 \le \frac{C(u^0,b,T)}{L}.
$$
We infer from \eqref{5.aux2} that
$$
  \big\|\varphi_i^L(u^{(\delta)})-\varphi_i^K(u^{(\delta)})\big\|_{L^1(Q_T)}
	\le \frac{C(u^0,b,T)}{L}.
$$

This estimate and the convergence \eqref{5.conv} allow us to conclude that
$(v_i^L)$ is a Cauchy sequence. Indeed, we find that
\begin{align*}
  \|v_i^L-v_i^K\|_{L^1(Q_T)}
	&\le \big\|v_i^L-\varphi_i^L(u^{(\delta)})\big\|_{L^1(Q_T)}
	+ \big\|\varphi_i^L(u^{(\delta)})-\varphi_i^K(u^{(\delta)})\big\|_{L^1(Q_T)} \\
	&\phantom{xx}{}+ \big\|\varphi_i^K(u^{(\delta)})-v_i^K\big\|_{L^1(Q_T)} \\
	&\le \big\|v_i^L-\varphi_i^L(u^{(\delta)})\big\|_{L^1(Q_T)}
	+ \frac{C(u^0,b,T)}{L}
	+  \big\|\varphi_i^K(u^{(\delta)})-v_i^K\big\|_{L^1(Q_T)}.
\end{align*}
In the limit $\delta\to 0$, we infer from \eqref{5.conv} that for all
$K$, $L\in\N$ with $K>L$,
$$
  \|v_i^L-v_i^K\|_{L^1(Q_T)}\le \frac{C(u^0,b,T)}{L},
$$
which proves the claim. Consequently, there exist functions $u_i\in L^1(Q_T)$ with
$u_i\ge 0$ in $Q_T$ such that $v_i^L\to u_i$ strongly in $L^1(Q_T)$ as $L\to\infty$.

Next, we prove that $u_i^{(\delta)}\to u_i$ a.e. For this, we proceed similarly as
in \cite[p.~572]{Fis15} and show that $(u_i^{(\delta)})$ converges in measure. Since
$$
  |u_i^{(\delta)}-u_i| \le \big|u_i^{(\delta)}-\varphi_i^L(u^{(\delta)})\big|
	+ \big|\varphi_i^L(u^{(\delta)})-v_i^L\big| + |v_i^L-u_i|,
$$
we have for any $\eps>0$,
\begin{equation}\label{5.aux3}
\begin{aligned}
  \mbox{meas}&\big(\big\{(x,t)\in Q_T:|u_i^{(\delta)}(x,t)-u_i(x,t)|>\eps\big\}\big) 
	\le \mbox{meas}(\{u_i^{(\delta)}\neq \varphi_i^L(u^{(\delta)})\}) \\
	&\phantom{xx}{}
	+ \mbox{meas}\bigg(\bigg\{|\varphi_i^L(u^{(\delta)})-v_i^L|>\frac{\eps}{2}
	\bigg\}\bigg) 
	+ \mbox{meas}\bigg(\bigg\{|v_i^L-u_i|>\frac{\eps}{2}\bigg\}\bigg).
\end{aligned}
\end{equation}
By (L2), $u_i^{(\delta)}\neq \varphi_i^L(u^{(\delta)})$ implies that
$\sum_{k=1}^n u_i^{(\delta)}\ge L$. Therefore, by the uniform $L^1$ bound for 
$u^{(\delta)}$, the first term on the right-hand side can be estimated as follows:
\begin{align*}
  \mbox{meas}&(\{u_i^{(\delta)}\neq\varphi_i^L(u^{(\delta)})\})
	\le \mbox{meas}\bigg(\bigg\{\sum_{k=1}^n u_k^{(\delta)}\ge L\bigg\}\bigg) \\
	&= \frac{1}{L}\int_0^T\int_\Omega\chi_{\{\sum_{k=1}^n u_k^{(\delta)}\ge L\}}Ldxdt
	\le \frac{1}{L}\int_0^T\int_\Omega\sum_{k=1}^n u_k^{(\delta)} dxdt
	\le \frac{C(u^0,b,T)}{L},
\end{align*}
where $\chi_A$ is the characteristic function on the set $A$. Similarly,
\begin{align*}
  \mbox{meas}\bigg(\bigg\{|\varphi_i^L(u^{(\delta)})-v_i^L|>\frac{\eps}{2}
	\bigg\}\bigg) 
	&\le \frac{2}{\eps}\big\|\varphi_i^L(u^{(\delta)})-v_i^L\big\|_{L^1(Q_T)} \\
	\mbox{meas}\bigg(\bigg\{|v_i^L-u_i|>\frac{\eps}{2}\bigg\}\bigg)
	&\le \frac{2}{\eps}\|v_i^L-u_i\|_{L^1(Q_T)}.
\end{align*}
Therefore, \eqref{5.aux3} gives for any $\eps>0$,
$$
  \mbox{meas}(\{|u_i^{(\delta)}-u_i|>\eps\})
	\le \frac{C(u^0,b,T)}{L} + \frac{2}{\eps}\|\varphi_i^L(u^{(\delta)})-v_i^L
	\big\|_{L^1(Q_T)} + \frac{2}{\eps}\|v_i^L-u_i\|_{L^1(Q_T)}.
$$
We infer from \eqref{5.conv} that in the limit $\delta\to 0$,
$$
  \limsup_{\delta\to 0}\mbox{meas}(\{u_i^{(\delta)}-u_i|>\eps\})
	\le \frac{C(u^0,b,T)}{L} + \frac{2}{\eps}\|v_i^L-u_i\|_{L^1(Q_T)}.
$$
As $v_i^L\to u_i$ a.e., the limit $L\to\infty$ then gives
$$
  \lim_{\delta\to 0}\mbox{meas}(\{|u_i^{(\delta)}-u_i|>\eps\}) = 0.
$$
This shows that $(u_i^{(\delta)})$ converges in measure. Hence, there exists
a subsequence, which is not relabeled, such that $u_i^{(\delta)}\to u_i$
a.e.\ in $Q_T$. The uniform bound for $(u_i^{(\delta)})$ in $L^{2+2/d}(Q_T)$
(see Lemma \ref{lem.delta})
implies that $u_i^{(\delta)}\to u_i$ strongly in $L^2(Q_T)$, which proves
\eqref{5.strong}. By the same lemma, also the weak convergence \eqref{5.weak}
follows (again up to a subsequence). Moreover, by Fatou's lemma,
$\int_\Omega h(u)dx<\infty$. 

{\em Step 4.} Next, we verify identity \eqref{5.weak1} by passing to the
limit $\delta\to 0$ in the weak formulation \eqref{5.weak0}.
We observe that, using the mean value theorem and (L5), 
$$
  \|\varphi_i^L(u^{(\delta)})-\varphi_i^L(u)\|_{L^2(Q_T)}
	\le \sup_{v\in(0,\infty)^n}|(\varphi_i^L)'(v)|\|u^{(\delta)}-u\|_{L^2(Q_T)}
	\le C\|u^\delta-u\|_{L^2(Q_T)},
$$
where $C>0$ is here and in the following a constant which is
independent of $\delta$ (and $L$). Consequently, the $L^2$ convergence
of $(u^{(\delta)})$ shows that $\varphi_i^L(u^{(\delta)})\to\varphi_i^L(u)$
strongly in $L^2(Q_T)$ as $\delta\to 0$, 
and the first integral in \eqref{5.weak0} converges:
$$
  \int_0^T\int_\Omega\varphi_i^L(u^{(\delta)})\pa_t\psi dxdt
	\to \int_0^T\int_\Omega\varphi_i^L(u)\pa_t\psi dxdt.
$$
By (L3) and (L5), the sequence 
$(\pa_j\varphi_i^L(u^{(\delta)})A_{j\ell}(u^{(\delta)}))$ is bounded
in $L^\infty(Q_T)$ with respect to $\delta$. 
We conclude from the convergence \eqref{5.strong} that
$\pa_j\varphi_i^L(u^{(\delta)})A_{j\ell}(u^{(\delta)})\to
\pa_j\varphi_i^L(u)A_{j\ell}(u)$ strongly in $L^2(Q_T)$. Together with
the weak convergence \eqref{5.weak} of the gradients, we infer that
\begin{align*}
  \int_0^T\int_\Omega\pa_j\varphi_i^L(u^{(\delta)})&\bigg(\sum_{\ell=1}^n
	A_{j\ell}(u^{(\delta)})\na u_\ell^{(\delta)}\bigg)\cdot\na\psi dxdt \\
	&\to \int_0^T\int_\Omega\pa_j\varphi_i^L(u)\bigg(\sum_{\ell=1}^n A_{j\ell}(u)
	\na u_\ell\bigg)\cdot\na\psi dxdt.
\end{align*}

Again using (L3), we have
$$
  \big\|\pa_j\varphi_i^L(u^{(\delta)})u_j^{(\delta)}\big\|_{L^\infty(Q_T)}
	+ \bigg\|\pa_j\varphi_i^L(u^{(\delta)})\frac{f_j(u^{(\delta)})}{1+\delta
	|f(u^{(\delta)})|}\bigg\|_{L^\infty(Q_T)} \le C(L).
$$
Consequently, by \eqref{5.strong},
$\pa_j\varphi_i^L(u^{(\delta)})u_j^{(\delta)}\to \pa_j\varphi_i^L(u)u_j$
and 
$$
  \pa_j\varphi_i^L(u^{(\delta)})\frac{f_j(u^{(\delta)})}{1+\delta|f(u^{(\delta)})|}
	\to \pa_j\varphi_i^L(u)f_j(u)\quad\mbox{strongly in }L^2(Q_T).
$$
This allows us to perform the limit $\delta\to 0$ in
the drift and reaction terms:
\begin{align*}
  \int_0^T\int_\Omega\sum_{j=1}^n\pa_j\varphi_i^L(u^{(\delta)})u_j^{(\delta)}
	b_j\cdot\na\psi dxdt
	&\to \int_0^T\int_\Omega\sum_{j=1}^n\pa_j\varphi_i^L(u)u_jb_j\cdot\na\psi dxdt \\
	\int_0^T\int_\Omega\sum_{j=1}^n\pa_j\varphi_i^L(u^{(\delta)})
	\frac{f_j(u^{(\delta)})\psi}{1+\delta|f(u^{(\delta)})|}dxdt
	&\to \int_0^T\int_\Omega\sum_{j=1}^n\pa_j\varphi_i^L(u)f_j(u)\psi dxdt.
\end{align*}

It remains to perform the limit $\delta\to 0$ in the integral involving
the second derivatives of $\varphi_i^L(u^{(\delta)})$ in \eqref{5.weak0}.
Define the signed Radon measures
$$
  \mu_i^{(\delta,L)} := \sum_{j,k=1}^n\pa_j\pa_k\varphi_i^L(u^{(\delta)})
	\bigg(\sum_{\ell=1}^n A_{j\ell}(u^{(\delta)})\na u_\ell^{(\delta)}
	- u_j^{(\delta)}b_j\bigg)\cdot\na u_k^{(\delta)}dxdt.
$$
It follows from (L7), the Cauchy-Schwarz inequality, and
the uniform bounds in Lemma \ref{lem.delta} that
\begin{align*}
  |\mu_i^{(\delta,L)}|(Q_T)
	&\le C\int_0^T\int_\Omega\sum_{k=1}^n\big(|\na u^{(\delta)}|
	+ \|b\|_{L^\infty(Q_T)}\big)|\na u_k^{(\delta)}|dxdt \\
	&\le C\big(\|b\|_{L^\infty(Q_T)}^2 + \|\na u^{(\delta)}\|_{L^2(Q_T)}^2\big)
	\le C(u^0,b,T).
\end{align*}
The weak-star compactness criterium for Radon measures \cite[Corollary 4.34]{Mag12}
then implies the existence of a subsequence (not relabeled) such that
$\mu_i^{(\delta,L)}\rightharpoonup^* \mu_i^L$ as $\delta\to 0$
in the sense of measures, where $\mu_i^L$ is a signed Radon measure. 
Hence,
\begin{align*}
  \int_0^T\int_\Omega\sum_{j,k=1}^n & \pa_j\pa_k\varphi_i^L(u^{(\delta)})
	\bigg(\sum_{\ell=1}^n A_{j\ell}(u^{(\delta)})\na u_\ell^{(\delta)}
	- u_j^{(\delta)}b_j\bigg)\cdot\na u_k^{(\delta)}\psi dxdt \\
	&\to \int_0^T\int_\Omega \psi d\mu_i^L(x,t).
\end{align*}
This shows \eqref{5.weak1}.

{\em Step 5.} The final step is the proof of the convergence \eqref{5.mu}.
For this, we write
\begin{align*}
  |\mu_i^{(\delta,L)}|(Q_T)
	&\le \sum_{j,k,\ell=1}^n\int_0^T\int_\Omega|\pa_j\pa_k\varphi_i^L(u^{(\delta)})| \\
	&\phantom{xx}{}\times\big(|A_{j\ell}(u^{(\delta)})|\,|\na u_\ell^{(\delta)}|
	+ |u_j^{(\delta)}|\|b\|_{L^\infty(Q_T)}\big)|\na u_k^{(\delta)}| dxdt \\
	&= \sum_{j,k,\ell=1}^n\sum_{K=1}^\infty\int_0^T\int_\Omega
	\chi_{\{K-1\le|u^{(\delta)}|<K\}}|\pa_j\pa_k\varphi_i^L(u^{(\delta)})| \\
	&\phantom{xx}{}\times\big(|A_{j\ell}(u^{(\delta)})|\,|\na u_\ell^{(\delta)}|
	+ |u_j^{(\delta)}|\|b\|_{L^\infty(Q_T)}\big)|\na u_k^{(\delta)}| dxdt \\
	&\le C\sum_{K=1}^\infty F_K^{(\delta)}G_K^L,
\end{align*}
where
\begin{align*}
	F_K^{(\delta)} &= \int_0^T\int_\Omega \chi_{\{K-1\le|u^{(\delta)}|<K\}}
	\big(|\na u^{(\delta)}| + \|b\|_{L^\infty(Q_T)}\big)|\na u^{(\delta)}|dxdt, \\
  G_K^L &= \sum_{j,k=1}^n\sup_{K-1\le|v|<K}(1+|v|)|\pa_j\pa_k\varphi_i^L(v)|.
\end{align*}
For the last inequality, we have used the fact that 
$A_{j\ell}(u^{(\delta)})$ depends linearly on $u_i^{(\delta)}$, i.e.\
$A_{j\ell}(u^{(\delta)})\le C(1+|u^{(\delta)}|)$.
By (L3), for any fixed $L\in\N$ and sufficiently large $K\in\N$, it holds that
$\pa_j\pa_k\varphi_i^L(v)=0$ for all $K-1\le|v|<K$ and consequently $G_K^L=0$.
This means that for any fixed $L\in\N$, the sum $\sum_{K=1}^\infty F_K^{(\delta)}G_L^K$
contains only a finite number of nonvanishing terms. 
By the weak-star lower semicontinuity
of the total variation of signed Radon measures on open sets
\cite[Prop.~4.29]{Mag12}, we deduce from $\mu_i^{(\delta,L)}\rightharpoonup^* \mu_i^L$
as $\delta\to 0$ that for some subsequence,
\begin{equation}\label{5.sub}
\begin{aligned}
  |\mu_i^L|(\overline\Omega\times[0,T))
	&= |\mu_i^{L}|(Q_T)
	\le\liminf_{\delta\to 0}|\mu_i^{(\delta,L)}|(Q_T) \\
	&\le C\liminf_{\delta\to 0}\sum_{K=1}^\infty F_K^{(\delta)} G_K^L 
	= C\sum_{K=1}^\infty G_K^L\lim_{\delta\to 0}F_K^{(\delta)}
\end{aligned}
\end{equation}
In the last equality, we have selected a common subsequence such that 
$(F_K^{(\delta)})$ converges.
It follows from the Cauch-Schwarz inequality and the uniform bounds in Lemma
\ref{lem.delta} that
\begin{align*}
  \sum_{K=1}^\infty F_K^{(\delta)} 
	&= \int_0^T\int_\Omega\big(|\na u^{(\delta)}| + \|b\|_{L^\infty(Q_T)}\big)
	|\na u^{(\delta)}| dxdt \\
	&\le C\big(\|\na u^{(\delta)}\|_{L^2(Q_T)}^2 + \|b\|_{L^\infty(Q_T)}^2\big)
	\le C(u^0,b,T).
\end{align*}
By Fatou's lemma, we have for the same subsequence as in \eqref{5.sub},
$$
  \sum_{K=1}^\infty\lim_{\delta\to 0}F_K^{(\delta)}
	\le \liminf_{\delta\to 0}\sum_{K=1}^\infty F_K^{(\delta)} \le C(u^0,b,T).
$$
It follows from (L7) that $0\le G_K^L\le C$ for some constant $C>0$, and hence
$$
  0\le G_K^L\lim_{\delta\to 0}F_K^{(\delta)}\le C\lim_{\delta\to 0}F_K^{(\delta)}.
$$
Moreover, we infer from (L6) that 
$$
  0\le \lim_{L\to\infty}G_K^L \le (1+K)\sum_{j,k=1}^n\lim_{L\to\infty}
	\sup_{v\in[0,\infty)^n}|\pa_j\pa_k\varphi_i^L(v)| = 0,
$$
i.e.\ $\lim_{L\to\infty}G_K^L=0$. We deduce from Lebesgue's dominated convergence theorem
that
$$
  0\le\lim_{L\to\infty}|\mu_i^L|(\overline\Omega\times[0,T)) 
	\le C\lim_{L\to\infty}\sum_{K=1}^\infty
	G_K^L\lim_{\delta\to 0}F_K^{(\delta)} 
	= C\sum_{K=1}^\infty\lim_{L\to\infty}G_K^L\lim_{\delta\to 0}F_K^{(\delta)} = 0.
$$
This ends the proof of \eqref{5.mu}.
\end{proof}

\begin{proof}[Proof of Theorem \ref{thm.ex}]
We apply Lemma \ref{lem.chain} with $v_i=\varphi_i^L(u)\in L^2(0,T;H^1(\Omega))$,
$v_{0,i}=\varphi_i^L(u^0)\in L^1(\Omega)$, $w_i=\sum_{j=1}^n\pa_j\varphi_i^L(u)f_j(u)
\in L^1(Q_T)$, $\mu_i=-\mu_i^L\in{\mathcal M}(\overline\Omega\times[0,T))$, 
$q_i=0$, and
$$
  z_i = -\sum_{j=1}^n\pa_j\varphi_i^L(u)\bigg(\sum_{\ell=1}^n A_{j\ell}(u)\na u_\ell
	- u_jb_j\bigg)\in L^2(Q_T;\R^n).
$$

Then we obtain from \eqref{5.weak1} that for all $\xi\in C^\infty([0,\infty)^n)$ with
$\xi'\in C_0^\infty([0,\infty);\R^n)$ and for all $\phi\in C_0^\infty(\overline\Omega
\times[0,T))$,
\begin{align}
  \bigg|-&\int_0^T\int_\Omega\xi(\varphi^L(u))\pa_t\phi dxdt
	- \int_\Omega \xi(\varphi^L(u^0))\phi(\cdot,0)dx \nonumber \\
	&\phantom{xx}{}+ \int_0^T\int_\Omega\sum_{i,k=1}^n\pa_i\pa_k\xi(\varphi^L(u))
	\sum_{j=1}^n\pa_j\varphi_i^L(u)\bigg(\sum_{\ell=1}^n A_{j\ell}(u)\na u_\ell
	- u_jb_j\bigg) \nonumber \\
	&\phantom{xxxx}{}\times\sum_{m=1}^n\pa_m\varphi_k^L(u)\na u_m \phi dxdt 
	\label{5.L} \\
	&\phantom{xx}{}+ \int_0^T\int_\Omega\sum_{i=1}^n\pa_i\xi(\varphi^L(u))
	\sum_{j=1}^n\pa_j\varphi_i^L(u)\bigg(\sum_{\ell=1}^n A_{j\ell}(u)\na u_\ell
	- u_jb_j\bigg)\cdot\na\phi dxdt \nonumber \\
	&\phantom{xx}{}- \int_0^T\int_\Omega\sum_{i=1}^n\pa_i\xi(\varphi^L(u))\sum_{j=1}^n
	\pa_j\varphi_i^L(u)f_j(u)\phi dxdt\bigg| \nonumber \\
	&\le C(\Omega)\|\phi\|_{L^\infty(Q_T)}\sup_{v\in[0,\infty)^n}|\xi'(v)|
	\sum_{i=1}^n|\mu_i^L|(\overline\Omega\times[0,T)). \nonumber
\end{align}

We wish to perform the limit $L\to\infty$ in \eqref{5.L}. 
We deduce from the mean value theorem and (L3) that 
$$
   \|\xi(\varphi^L(u))-\xi(u)\|_{L^1(Q_T)}
	 \le\sup_{v\in[0,\infty)^n}|\xi'(v)|\,\|\varphi^L(u)-u\|_{L^1(Q_T)}
	\le C\|\varphi^L(u)-u\|_{L^1(Q_T)}.
$$
By definition of $\varphi_i^L$, $\varphi_i^L(u)$ converges pointwise to $u_i$
as $L\to\infty$. Together with the linear bound for $\varphi_i^L$ from (L1),
$0\le\varphi_i^L(u)\le u_i+2\sum_{k=1}^n u_k\in L^1(Q_T)$, which gives a uniform
bound, we can apply LebesgueÄs dominated convergence theorem to conclude that
$\varphi_i^L(u)\to u_i$ strongly in $L^1(Q_T)$ and consequently,
$\xi(\varphi^L(u))\to \xi(u)$ strongly in $L^1(Q_T)$. Similarly,
$\xi(\varphi^L(u^0))\to\xi(u^0)$ strongly in $L^1(Q_T)$. Therefore, the first
two integrals in \eqref{5.L} converge:
\begin{align*}
  -\int_0^T\int_\Omega\xi(\varphi^L(u))\pa_t\phi dxdt 
	&\to -\int_0^T\int_\Omega\xi(u)\pa_t\phi dxdt, \\
	-\int_\Omega\xi(\varphi^L(u^0))\phi(\cdot,0)dx
	&\to -\int_\Omega\xi(u^0)\phi(\cdot,0) dx.
\end{align*}

Next, consider the last integral on the left-hand side of \eqref{5.L}.
Let $L_0>0$ be such that $\operatorname{supp}\xi'\subset[0,L_0/n)^n$ and let $L>L_0$.
We distinguish the cases $\sum_{i=1}^n u_i\ge L_0$ and
$\sum_{i=1}^n u_i < L_0$. In the former case, it follows from (L8) 
that $\sum_{i=1}^n\varphi_i^L(u)\ge L_0$ or $\varphi^L(u)\not\in
[0,L_0/n)^n$ and, in particular, $\varphi^L(u)\not\in\operatorname{supp}\xi'$.
Hence, $\pa_i\xi(\varphi^L(u))=0$ and $\pa_i\pa_k\xi(\varphi^L(u))=0$.
In the latter case, we deduce from (L2) that $\varphi^L(u)=u$ and consequently
$\pa_j\varphi_i^L(u)=\delta_{ij}$. Furthermore, we have
$$
  \{u\in\operatorname{supp}\xi'\}
	\subset \bigg\{u\in \bigg[0,\frac{L_0}{n}\bigg)^n\bigg\}
	\subset\bigg\{\sum_{i=1}^n u_i<L_0\bigg\}.
$$
This allows us to reformulate the last term on the left-hand side of \eqref{5.L}:
\begin{align*}
  \int_0^T\int_\Omega & \sum_{i=1}^n\pa_i\xi(\varphi^L(u))\sum_{j=1}^n
	\pa_j\varphi_i^L(u)f_j(u)\phi dxdt \\
	&= \int_0^T\int_\Omega\sum_{i=1}^n\chi_{\{\sum_{i=1}^n u_i\ge L_0\}}
	\pa_i\xi(\varphi^L(u))\sum_{j=1}^n\pa_j\varphi_i^L(u)f_j(u)\phi dxdt \\
	&\phantom{xx}{}+ \int_0^T\int_\Omega\sum_{i=1}^n\chi_{\{\sum_{i=1}^n u_i<L_0\}}
	\pa_i\xi(\varphi^L(u))\sum_{j=1}^n\pa_j\varphi_i^L(u)f_j(u)\phi dxdt \\
	&= \int_0^T\int_\Omega\sum_{i=1}^n\chi_{\{\sum_{i=1}^n u_i<L_0\}}
	\pa_i\xi(u)f_i(u)\phi dxdt \\
	&= \int_0^T\int_\Omega\sum_{i=1}^n\pa_i\xi(u)f_i(u)\phi dxdt,
\end{align*}
and this expression does not depend on $L$. In a similar way, we compute
the third and fourth integrals on the left-hand side of \eqref{5.L}:
\begin{align*}
  \int_0^T\int_\Omega&\sum_{i,k=1}^n\pa_i\pa_k\xi(\varphi^L(u))
	\sum_{j=1}^n\pa_j\varphi_i^L(u)\bigg(\sum_{\ell=1}^n A_{j\ell}(u)\na u_\ell
	- u_jb_j\bigg) \\
	&\phantom{xx}{}\times\sum_{m=1}^n\pa_m\varphi_k^L(u)\na u_m \phi dxdt \\
	&= \int_0^T\int_\Omega\sum_{i,k=1}^n\pa_i\pa_k\xi(u)\bigg(
	\sum_{\ell=1}^n A_{j\ell}(u)\na u_\ell - u_jb_j\bigg)\cdot\na u_k \phi dxdt, \\
	\int_0^T\int_\Omega&\sum_{i=1}^n\pa_i\xi(\varphi^L(u))
	\sum_{j=1}^n\pa_j\varphi_i^L(u)\bigg(\sum_{\ell=1}^n A_{j\ell}(u)\na u_\ell
	- u_jb_j\bigg)\cdot\na\phi dxdt \\
	&= \int_0^T\int_\Omega\sum_{i=1}^n\pa_i\xi(u)
	\bigg(\sum_{\ell=1}^n A_{j\ell}(u)\na u_\ell - u_jb_j\bigg)\cdot\na\phi dxdt.
\end{align*}
Finally, because of \eqref{5.mu}, the right-hand side of \eqref{5.L} vanishes
in the limit $L\to\infty$. Therefore, passing to the limit $L\to\infty$ in
\eqref{5.L}, we see that \eqref{1.renorm} holds. This concludes the proof.
\end{proof}


\end{document}